\documentclass[12pt,reqno]{amsart} % amsart

\usepackage{mathrsfs,amsfonts,amsthm,amsmath,amssymb} % thmtools
\usepackage{enumitem}                 % 精确控制列表环境 (如 itemize, enumerate, description) 样式和间距
\usepackage{lineno}                   % to add line numbers for correction

\usepackage[hyperindex, pdfencoding = auto, psdextra, bookmarksdepth = 4]{hyperref}  % 书签与链接的交叉引用
\hypersetup{
    bookmarksopen      = true,
    bookmarksopenlevel = 1,
    bookmarksnumbered  = true,
    pdftitle    = {},
    pdfauthor   = {},
    linktoc     = page,
    anchorcolor = green,
    colorlinks  = true,        % Colours links instead of ugly boxes,
    linkcolor   = black,
    citecolor   = blue,
    filecolor   = blue,
    urlcolor    = blue,
}

\usepackage[capitalise, noabbrev, nameinlink]{cleveref}
\usepackage{thmtools} % 增强和统一定理类环境的定义+管理+引用+修复anchor冲突问题+与cleveref深度集成

\theoremstyle{plain}

\newtheorem{theorem}{Theorem}[section]
\newtheorem{lemma}[theorem]{Lemma}

\newtheorem{proposition}[theorem]{Proposition}
\newtheorem{corollary}[theorem]{Corollary}

\newtheorem{problem}[theorem]{Problem}

\theoremstyle{definition}   % 来自amsthm宏包的定理样式设置命令 \theoremstyle{风格选项:  plain, definition, remark}
\newtheorem{definition}[theorem]{Definition}
\newtheorem{remark}[theorem]{Remark}
\newtheorem{example}[theorem]{Example}

\makeatletter                                         % make之间的部分允许访问内部命令
    \newcommand{\LeftEqNo}{\let\veqno\@@leqno}        % 让公式编号出现在左侧
\makeatother

\numberwithin{equation}{section}                      % 让公式按 section 进行编号

\usepackage[numbers,sort&compress]{natbib}  % 连续编号参考文献

\begin{document}

\

\vspace{-2cm}

\title[Operators with disconnected spectrum in von Neumann algebras]{Operators with disconnected spectrum in von Neumann algebras}

% \author[]{ABC}
% \author[J. Fang]{Junsheng Fang}
% \address{Junsheng Fang, School of Mathematical Sciences, Hebei Normal University, Shijiazhuang, 050024, China}
% \email{jfang@hebtu.edu.cn}
% \thanks{This article was partly supported by Tianyuan Mathematics Research Center. Chunlan Jiang and Junsheng Fang are partly supported by the Hebei Natural Science Foundation (No.A2023205045).}

% \author[C. Jiang]{Chunlan Jiang}
% \address{Chunlan Jiang, School of Mathematical Sciences, Hebei Normal University, Shijiazhuang, 050024, China}
% \email{cljiang@hebtu.edu.cn}

\author{Minghui Ma}
\address{Minghui Ma, School of Mathematical Sciences, Dalian University of Technology, Dalian, 116024, China}
\email{minghuima@dlut.edu.cn}

%\author{Junhao Shen}
%\address{Junhao Shen, Department of Mathematics \& Statistics, University of New Hampshire, Durham, 03824, US}
%\email{junhao.shen@unh.edu}

\author{Rui Shi}
\address{Rui Shi, School of Mathematical Sciences, Dalian University of Technology, Dalian, 116024, China}
\email{ruishi@dlut.edu.cn}
\thanks{Rui Shi, Minghui Ma, and Tianze Wang were supported by NSFC (No.12271074). % and the Liaoning Revitalization Talents Program (No.XLYC2403058). % and the Fundamental Research Funds for the Central Universities (No. DUT23LAB305).
Minghui Ma was also supported by the China Postdoctoral Science Foundation (No.2025M783077) and the Postdoctoral Fellowship Program of CPSF (No.GZC20252022).}

\author{Tianze Wang}
\address{Tianze Wang, School of Mathematical Sciences, Dalian University of Technology, Dalian, 116024, China}
\email{swan0108@mail.dlut.edu.cn}

\keywords{disconnected spectrum, unitarily invariant norm, von Neumann algebra, essential ideal, real rank zero}
\subjclass[2020]{Primary 46L10, 47A58; Secondary  47C15, 47B10}

\begin{abstract}
    Let $\mathcal{M}$ be a von Neumann algebra, $\mathcal{I}$ a weak-operator dense ideal in $\mathcal{M}$, and $\Phi$ a unitarily invariant $\|\cdot\|$-dominating norm on $\mathcal{I}$.
    In this paper, we provide a necessary and sufficient condition on $\Phi$ such that every operator in $\mathcal{M}$ can be expressed as the sum of an operator in $\mathcal{M}$ with disconnected spectrum and an operator in $\mathcal{I}$ whose $\Phi$-norm is arbitrarily small.
    Similarly, if $\mathcal{A}$ is a unital $C^*$-algebra of real rank zero with dimension greater than one and $\mathcal{I}$ is an essential ideal in $\mathcal{A}$, then every element in $\mathcal{A}$ can be written as the sum of an operator in $\mathcal{A}$ with disconnected spectrum and an operator in $\mathcal{I}$ whose norm is arbitrarily small.
\end{abstract}

\maketitle
%\end{frontmatter}

% \vspace{-1cm}

\section{Introduction}

Let $\mathcal{H}$ be a complex Hilbert space and $\mathcal{B}(\mathcal{H})$ the algebra of all bounded operators on $\mathcal{H}$.
An operator $T$ in $\mathcal{B}(\mathcal{H})$ is said to be {\em irreducible} (resp. \emph{strongly irreducible}) if every projection (resp. idempotent) commuting with $T$ is trivial.
Otherwise, $T$ is said to be {\em reducible} (resp. \emph{weakly reducible}).
Since every idempotent is similar to a projection by \cite[Corollary 0.15]{RR73}, an operator $T$ is strongly irreducible if and only if $XTX^{-1}$ is irreducible for every invertible operator $X$.
By the analytic function calculus (see \cite[Proposition 4.11]{Conway-book}), every operator with disconnected spectrum is weakly reducible;
equivalently, the spectrum of every strongly irreducible operator is connected.

In 1968, P. Halmos \cite{Hal68} proved that the set of irreducible operators on a separable complex Hilbert space is a dense $G_{\delta}$ subset in the operator norm topology.
Later, he raised ten problems in Hilbert spaces in  \cite{Hal70}.
Among them, the eighth problem asks whether every operator on a separable infinite-dimensional complex Hilbert space is a  limit of reducible operators.
From a result of D. Herrero and N. Salinas \cite{HS72}, the set of operators on an infinite-dimensional complex Hilbert space with disconnected spectrum is uniformly dense.
As a consequence, the set of weakly reducible operators is also uniformly dense.
The affirmative answer to Halmos' eighth problem is due to D. Voiculescu \cite{Voi76} by applying his remarkable non-commutative Weyl-von Neumann theorem.
It is worth mentioning that D. Herrero and C. Jiang \cite{HJ90} proved that, on a separable infinite-dimensional complex Hilbert space, the set of strongly irreducible operators is uniformly dense in the set of operators with connected spectrum. We refer the reader to \cite{Gil72,JW98-book,JW06-book} for further related results.

Recall that a {\em von Neumann algebra} is a self-adjoint unital subalgebra of $\mathcal{B}(\mathcal{H})$ that is closed in the weak-operator topology.
Moreover, $\mathcal{M}$ is said to be a {\em factor} if its center consists of scalar multiples of the identity.
Factors are classified into type $\mathrm{I}_n$, type $\mathrm{I}_\infty$, type $\mathrm{II}_1$, type $\mathrm{II}_\infty$, and type $\mathrm{III}$ (see \cite[Theorem 6.5.2]{KR2}).
By definition, $\mathcal{B}(\mathcal{H})$ is a type $\mathrm{I}$ factor.
It is natural to generalize the notion of irreducibility of operators in factors.
In a factor $\mathcal{M}$, an operator $T$ is called \emph{reducible} (resp. \emph{weakly reducible}) if there exists a nontrivial projection (resp. idempotent) in $\mathcal{M}$ that commutes with $T$.
Otherwise, $T$ is \emph{irreducible}  (resp. \emph{strongly irreducible})  in $\mathcal{M}$.
Recently, many results related to reducible or irreducible operators have been extended to von Neumann algebras \cite{FJLX-2013,FJS-2023,MSSW25,SS19,SS20,Shi21}.
As in $\mathcal{B}(\mathcal{H})$, every operator with disconnected spectrum in a factor is weakly reducible.
In this paper, we investigate the density of operators with disconnected spectrum in von Neumann algebras (resp. $C^*$-algebras of real rank zero) with perturbations in essential ideals (see \Cref{def essential}).

Throughout, let $\mathcal{M}$ be a von Neumann algebra, $\mathcal{P}(\mathcal{M})$ the set of projections in $\mathcal{M}$, $\mathcal{I}$ a weak-operator dense (two-sided) ideal in $\mathcal{M}$, and $\Phi$ a unitarily invariant norm on $\mathcal{I}$.
For example, $\mathcal{M}$ itself is a weak-operator dense ideal in $\mathcal{M}$ and the operator norm is unitarily invariant on $\mathcal{M}$.
Likewise, the set of finite-rank operators and, more generally, the set of Schatten $p$-class operators for $p\geqslant 1$ form weak-operator dense ideals in $\mathcal{B}(\mathcal{H})$.
Applying unitarily invariant norms on weak-operator dense ideals to operator-theoretic problems is strongly motivated by Voiculescu's work \cite{Voi79}, in which he affirmatively answered Halmos' fourth problem (see \cite{Hal70}) by introducing normed ideals into the study of diagonalization of normal operators.
Meanwhile, the main result of \cite{Voi79} initiated a series of far-reaching developments in von Neumann algebras (see \cite{BSZ18,HMS26,HS18,KW02,LSSW19,LSS20,Sher07,Voi76,Voi79,Voi81,Voi19,Xia24}).
Inspired by this line of research, we establish the following \emph{sup-inf theorem} for unitarily invariant norms.

\begin{theorem}[\textbf{Sup-Inf}]\label{thm main}
    Let $\mathcal{M}$ be a von Neumann algebra with $\dim\mathcal{M}>1$, $\mathcal{I}$ a weak-operator dense ideal in $\mathcal{M}$, and $\Phi$ a unitarily invariant $\|\cdot\|$-dominating norm on $\mathcal{I}$.
    The following statements are equivalent.
    \begin{enumerate}[label= $(\arabic*)$, ref= \arabic*, leftmargin=*] % labelsep = 1 em, start = 1
    \item\label{item mainthm2}  $\sup\limits_{0\ne P\in\mathcal{P}(\mathcal{M})}
        \inf\limits_{\substack{0\ne E\leqslant P \\ E\in\mathcal{I}\cap\mathcal{P}(\mathcal{M})}}\Phi(E)<\infty$.
    \item\label{item mainthm1}  For every $T\in\mathcal{M}$ and $\varepsilon>0$, there exists an operator $X$ in $\mathcal{I}$ with $\Phi(X)<\varepsilon$ such that the spectrum of $T+X$ is disconnected.
    \end{enumerate}
\end{theorem}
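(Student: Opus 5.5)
We prove the two implications separately. The easier direction is $(2)\Rightarrow(1)$, which we prove by contraposition. Suppose the supremum in (1) is infinite. We aim to find a self-adjoint (or scalar-like) operator $T$ for which small $\Phi$-perturbations cannot disconnect the spectrum.

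\textbf{Direction $(2)\Rightarrow(1)$.} Fix $N>0$ and choose a nonzero projection $P$ such that every nonzero projection $E\leqslant P$ lying in $\mathcal{I}$ satisfies $\Phi(E)\geqslant N$. We take $T$ to be an operator with connected spectrum that is "spread" along $P$: for instance $T=0$ on $P^\perp$ and $T=PAP$ with $\sigma(A)$ an interval on the range of $P$ (or simply $T=P$ or $T=0$ when dimensions force this). Suppose $T+X$ has disconnected spectrum with $\Phi(X)<\varepsilon$. Then the Riesz idempotent $F$ of a clopen piece of the spectrum is nontrivial.

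Since $\|X\|\leqslant\Phi(X)<\varepsilon$, holomorphic functional calculus compares $F$ with spectral data of $T$. Because $\sigma(T)$ is connected, $F$ must differ from $0$ or $I$ "by something in $\mathcal{I}$". Concretely, writing $F$ via a contour integral, the resolvent identity gives that $F$ or $I-F$ is of the form $\frac{1}{2\pi i}\oint (\lambda-T)^{-1}X(\lambda-T-X)^{-1}\,d\lambda\in\mathcal{I}$. The range projection of this idempotent is then in $\mathcal{I}$ and is controlled in $\Phi$ by $C\varepsilon$.

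The delicate point is producing a nonzero projection $E\leqslant P$ in $\mathcal{I}$ with $\Phi(E)\lesssim\varepsilon\cdot(\text{resolvent bounds})$, contradicting the choice of $P$. I expect this to be the main obstacle, since the resolvent bounds depend on the contour and cannot be made uniform naively. I would first try the simplest choice, namely $T$ a scalar multiple of a projection or $T=0$, and treat the gap in the spectrum via a suitable normalization. Then I would use that an idempotent $F\in\mathcal{I}$ has range projection $R$ with $\Phi(R)\leqslant\Phi(F)$ (by $R=F(F^*F)^{-1/2}\cdots$ polar arguments and $\|\cdot\|$-domination). Finally, compressing by $P$ would give $E$.

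\textbf{Direction $(1)\Rightarrow(2)$.} This is the constructive direction, and we would proceed in four steps.

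1. Let $K=\sup_P\inf_E\Phi(E)<\infty$. Given $T$ and $\varepsilon$, first use a Weyl–von Neumann/spectral argument to find a nonzero projection $P$ and a small perturbation making $T$ have a "corner" $E$ with $E\in\mathcal{I}$, $\Phi(E)\leqslant K+1$. The idea is to choose $E$ inside a spectral projection where $T$ is nearly a scalar $\lambda$, i.e. $\|(T-\lambda)E\|$ and $\|E(T-\lambda)\|$ are small. This would use polar decomposition of $T-\lambda$ for $\lambda\in\partial\sigma(T)$, or approximate point spectrum via spectral projections of $|T-\lambda|$ in $\mathcal{M}$.

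2. Perturb by $X_1=-(T-\lambda)E-E(T-\lambda)(I-E)$, which lies in $\mathcal{I}$ since $\mathcal{I}$ is an ideal. Its $\Phi$-norm is at most $2\Phi(E)\,\|(T-\lambda)|_{E}\|$-type quantities, which is small because $\Phi(AEB)\leqslant\|A\|\Phi(E)\|B\|$. After this, $E$ reduces $T+X_1$ with $(T+X_1)E=\lambda E$.

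3. Add $X_2=\mu E$, where $\mu$ is a point with $|\mu|$ small relative to $1/(K+1)$. Since $\lambda$ lies on the boundary of $\sigma(T)$, the shifted value $\lambda+\mu$ can be chosen outside the spectrum of the compression $(I-E)(T+X_1)(I-E)$, so that the isolated point $\lambda+\mu$ is separated. This uses that the boundary of the spectrum is stable under small perturbations; we pick $\lambda$ on the outer boundary of $\sigma(T)$ and move outward.

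4. The resulting spectrum is $\{\lambda+\mu\}\cup\sigma\bigl((I-E)(T+X_1)(I-E)\bigr)$, which is disconnected.

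The main obstacle in this direction is guaranteeing that the compression's spectrum does not grow to swallow $\lambda+\mu$. We control this by choosing $E$ with $\|(T-\lambda)E\|$ much smaller than $|\mu|$, which is possible because $\inf\Phi(E)$ over subprojections is bounded by $K$ uniformly for every $P$. Here the hypothesis $\dim\mathcal{M}>1$ ensures that $I-E\neq0$ can be arranged.
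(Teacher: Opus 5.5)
Both directions have genuine gaps.

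\textbf{Direction $(1)\Rightarrow(2)$.} The argument breaks at Step~2. The term $E(T-\lambda)(I-E)$ has $\Phi$-norm controlled by $\Phi(E)\,\|E(T-\lambda)\|=\Phi(E)\,\|(T-\lambda)^*E\|$, not by $\|(T-\lambda)E\|$. So you need a single $E$ that is an approximate eigenprojection for both $T-\lambda$ and $(T-\lambda)^*$. A boundary point gives each separately, but not together. In $\mathcal{M}=\mathcal{I}=M_2(\mathbb{C})$ with $\Phi=\|\cdot\|$, $T=\left(\begin{smallmatrix}0&1\\0&0\end{smallmatrix}\right)$ and $\lambda=0$, every rank-one $E=xx^*$ satisfies $\|TE\|^2+\|ET\|^2=\|Tx\|^2+\|T^*x\|^2=1$. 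More globally, Step~2 would make operators commuting with a nontrivial projection norm dense in every $\mathcal{M}$ with $\dim\mathcal{M}>1$. That fails already in $M_2(\mathbb{C})$, where such matrices are normal, and in non-Gamma $\mathrm{II}_1$ factors.

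The repair is to not reduce. Take $X=(\lambda+\mu-T)E$, so that $T+X$ is only upper triangular with respect to $E\oplus(I-E)$, with normal corner $(\lambda+\mu)E$. Then use the ingredient you are missing: for $S=\left(\begin{smallmatrix}T_1&T_{12}\\0&T_2\end{smallmatrix}\right)$ one has $\sigma_{ap}(T_1)\cup(\sigma(T_2)\setminus\sigma(T_1))\subseteq\sigma(S)\subseteq\sigma(T_1)\cup\sigma(T_2)$, with equality on the right when $T_1$ is normal. The same fact, applied to $T-(T-\lambda)E$ (which lies within $\|(T-\lambda)E\|$ of $T$), is what puts $\sigma_{(I-E)\mathcal{M}(I-E)}((I-E)T(I-E))$ near $\sigma(T)$ via upper semicontinuity. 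Without it, your claim that the compression does not swallow $\lambda+\mu$ is unjustified, since compressions can have far larger spectra: the bilateral shift compressed to the complement of one basis vector has spectrum $\overline{\mathbb{D}}$. Choosing $\operatorname{Re}\lambda$ maximal and $\mu>0$ then gives a clean half-plane separation.

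\textbf{Direction $(2)\Rightarrow(1)$.} There is no viable $T$, and the Riesz-idempotent estimate cannot give a contradiction.

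Self-adjoint or interval-spectrum choices never work. Take $\mathcal{M}=\ell^\infty$, $\mathcal{I}$ the finitely supported sequences, and $\Phi(x)=\sum_n n|x_n|$. Then $c_\Phi=\infty$, yet for every self-adjoint $T=(t_n)$ the perturbation $X=\frac{i\varepsilon}{2}e_1$ has $\Phi(X)<\varepsilon$ and $\sigma(T+X)=\{t_1+\frac{i\varepsilon}{2}\}\cup\overline{\{t_n\}_{n\geqslant2}}$ is disconnected. More generally, $X$ may live where $f_\Phi$ is small, for instance under $I-P$, which a single $P$ does not constrain.

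Moreover, every nonzero idempotent $F\in\mathcal{I}$ has $\Phi(F)\geqslant\|F\|\geqslant1$. So any bound $\Phi(F)\leqslant C\,\Phi(X)$ forces $C\geqslant 1/\varepsilon$: the contour constants must blow up as the spectral gap closes. Nothing places the range of $F$ under $P$, and compressing a projection by $P$ does not give a projection.

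The missing idea is a direct spectral construction over infinitely many central summands:
\begin{enumerate}
\item Since $f_\Phi(P)=f_\Phi(C_P)$, $c_\Phi=\infty$ yields orthogonal nonzero central projections $Z_n$ with $f_\Phi(Z_n)\geqslant 2^n$.
\item For central $Z$ one has $\|XZ\|f_\Phi(Z)\leqslant\Phi(X)$, so $\Phi(X)<1$ forces $\|XZ_n\|\leqslant 2^{-n}$.
\item Take $T=\sum_{n\geqslant1}\lambda_nZ_n$ with $\{\lambda_n\}$ dense in $\overline{\mathbb{D}}$ and $|\lambda_n|\leqslant 1-2^{-n}$.
\end{enumerate}
Then $\|T+X\|\leqslant 1$. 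For each $n$, $\sigma(T+X)$ contains $\sigma_{\mathcal{M}Z_n}(\lambda_nZ_n+XZ_n)$, a nonempty set within $2^{-n}$ of $\lambda_n$. Hence $\sigma(T+X)=\overline{\mathbb{D}}$, which is connected, for every $X$ with $\Phi(X)<1$.
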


We make a few brief comments on \Cref{thm main}, which we hope will contribute to a deeper understanding of the relationship between operator structures and perturbations under unitarily invariant norms.

To gain some immediate intuition for the condition \eqref{item mainthm2} in \Cref{thm main}, one may take $\mathcal{M}$ as a direct sum of infinitely many copies of $\mathcal{B}(\mathcal{H})$.
In this setting, \eqref{item mainthm2} simply requires that
$\sup\{\Phi(E)\colon E~\text{is a minimal projection in}~\mathcal{M}\}<\infty$.

As an application of \Cref{thm main} and \Cref{lem sp-disconnected}, \emph{the set of operators in $\mathcal{M}$ with disconnected spectrum is open and dense in the operator norm topology};
equivalently, \emph{the set of operators in $\mathcal{M}$ with connected spectrum is closed and nowhere dense in the operator norm topology}.
Consequently, in every factor with dimension greater than one, the set of weakly reducible operators is dense in the operator norm (see \cite[Theorem 3.1]{FJS-2023}).
In contrast, the authors of \cite[Theorem 6.7]{SS19} proved that in every non-Gamma type $\mathrm{II}_1$ factor, the set of reducible operators is nowhere dense and not closed in the operator norm topology, while the authors of \cite[Theorem 5.1]{HMS26} proved that in every separable properly infinite factor, the set of reducible operators is uniformly dense.

In contrast with Theorem 1.1, we obtain an analogous result in the setting of $C^*$-algebras. L. Brown and G. Pedersen \cite{BP91} introduced the notion of \emph{real rank zero} for unital $C^*$-algebras.
Including von Neumann algebras as a properly subclass (see \cite[Proposition 1.3]{BP91}), $C^*$-algebras of real rank zero have abundant projections and the structure of this class is close to that of von Neumann algebras (see \cite[Theorem 2.6]{BP91}).
This significantly simplifies the corresponding $K$-theory and classification problems, making real rank zero a cornerstone in the development of the Elliott classification program.
For $C^*$-algebras of real rank zero, we obtain the following result.
See \emph{essential ideals} in unital $C^*$-algebras with a characterization in von Neumann algebras later in \Cref{def essential} and \Cref{lem essential=dense}.

\begin{theorem}\label{thm main-RR0}
    Let $\mathcal{A}$ be a unital $C^*$-algebra of real rank zero with $\dim\mathcal{A}>1$, and $\mathcal{I}$ an essential ideal in $\mathcal{A}$.
    Then for every $T\in\mathcal{A}$ and $\varepsilon >0$, there exists an element $X$ in $\mathcal{I}$ with $\|X\|<\varepsilon$ such that the spectrum of $T+X$ is disconnected.
\end{theorem}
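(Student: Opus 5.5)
The plan is to produce, inside $\mathcal{I}$, a projection $E$ that is small in a suitable sense, and then to split off a corner of $T$ so that the spectrum is forced to have an isolated piece. Throughout, $\mathcal{I}$ is a closed essential ideal; after taking norm closures we may assume it is closed, since a perturbation in the closure can be approximated by one in $\mathcal{I}$. Disconnectedness of the spectrum is stable under small norm perturbations by upper semicontinuity of the spectrum, as in \Cref{lem sp-disconnected}. In a real rank zero algebra the closed ideal $\mathcal{I}$ also has real rank zero, so it has an approximate unit of projections. Essentiality means $\mathcal{I}$ has zero annihilator.

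\textbf{Step 1.} First perturb $T$ by less than $\varepsilon/2$ so that its spectrum has nonempty interior, or at least so that some point $\lambda_0$ lies in the spectrum. Then choose a nonzero projection $E\in\mathcal{I}$ such that $E$ is "almost reducing" for $T$. Concretely, I would try to find $E\in\mathcal{I}$, $E\neq 0,1$, with $\|(1-E)TE\|$ and $\|ET(1-E)\|$ both less than $\varepsilon/4$. If this works, set $T_1=ETE+(1-E)T(1-E)$. Then $T-T_1\in\mathcal{I}$, because $E\in\mathcal{I}$, and $\|T-T_1\|<\varepsilon/2$.

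\textbf{Step 2.} With such $E$, translate the corner: $T_2=T_1+\mu E$ with $\mu$ large gives $\sigma(T_2)=\sigma(ETE|_E)\cup\sigma((1-E)T(1-E))$, and these two sets become disjoint if $\mu\gg\|T\|$. However, the size of $\mu E$ is not small, so this does not work as stated. The fix is to compress instead: take $T_2=T_1-ETE+\lambda E$ with $\lambda$ a scalar. This replaces the corner by a scalar, and the perturbation $E(\lambda-T)E$ has norm small exactly when $ETE\approx\lambda E$. So I really need $E$ to be almost an "eigenprojection" for $T$: $\|TE-\lambda E\|$ small.

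Then $\sigma(T_2)=\{\lambda\}\cup\sigma_{(1-E)}((1-E)T(1-E))$. To make $\lambda$ isolated, I would also move the corner to $\lambda'$ with $|\lambda'-\lambda|<\varepsilon/4$, chosen outside the compact set $\sigma((1-E)T(1-E))$. If that set has empty interior near $\lambda$ this is possible. Otherwise I would first add a small perturbation in $(1-E)\mathcal{I}(1-E)$ to punch a hole, or choose $\lambda'$ in a gap. Alternatively, I would repeat the construction with two orthogonal projections $E_1,E_2\in\mathcal{I}$ and scalars placed so that the spectrum visibly splits: one corner is scalar $\lambda_1$, the remainder is pushed away.

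\textbf{Step 3 (main obstacle).} The crux is finding a nonzero projection $E\in\mathcal{I}$ with $\|(T-\lambda)E\|<\delta$. My approach is to use an approximate point of the spectrum. Pick $\lambda\in\partial\sigma(T)$, so $T-\lambda$ is not bounded below in the appropriate sense. Then $|T-\lambda|^2=(T-\lambda)^*(T-\lambda)$ has $0$ in its spectrum, or else the adjoint does, in which case I would work with $T^*$ and take adjoints at the end. By real rank zero, $|T-\lambda|^2$ can be approximated by self-adjoint elements with finite spectrum. This gives a nonzero spectral projection $F$ with $\|(T-\lambda)F\|<\delta$.

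Next I must get inside $\mathcal{I}$. Since $\mathcal{I}$ is essential, $F\mathcal{I}F\neq0$. Real rank zero of the hereditary subalgebra $F\mathcal{I}F$ then yields a nonzero projection $E\le F$ with $E\in\mathcal{I}$, and $\|(T-\lambda)E\|=\|(T-\lambda)FE\|<\delta$.

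Off-diagonal smallness follows, since $\|(1-E)TE\|\le\|(T-\lambda)E\|$; the other corner $ET(1-E)$ is handled by the direct-sum estimate after a further small correction. The case $E=1$ occurs only if $1\in\mathcal{I}$, i.e.\ $\mathcal{I}=\mathcal{A}$. In that case, $\dim\mathcal{A}>1$ supplies a nontrivial projection, and I would choose $E$ inside $F$ or $1-F$ accordingly.
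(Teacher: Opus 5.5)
Your Step 3 is sound and is essentially the paper's \Cref{lem small-TE-RR0}: you pass through real rank zero of the hereditary subalgebra $F\mathcal{I}F$ instead of \Cref{lem E<P-RR0}, and the reduction to closed $\mathcal{I}$ is harmless. The genuine gap is in Steps 1--2. You require $E$ to be almost reducing, i.e.\ both $(1-E)TE$ and $ET(1-E)$ small. But $\|(T-\lambda)E\|$ controls only the first, and the ``further small correction'' for $ET(1-E)$ does not exist in general.

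Take $T=e_{12}$ in $\mathcal{A}=M_2(\mathbb{C})$, where necessarily $\mathcal{I}=\mathcal{A}$ and $\sigma(T)=\{0\}$. Every rank-one projection $E=vv^*$ satisfies $\|(1-E)TE\|=|v_2|^2$ and $\|ET(1-E)\|=|v_1|^2$, so the two corners always sum to $1$. Hence no $T_1=ETE+(1-E)T(1-E)$ is close to $T$.

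The missing idea is that block-diagonality is unnecessary. Perturb only by $X=(\lambda'-T)E\in\mathcal{I}$. Then $T+X=\lambda'E+T(1-E)$ is upper triangular with respect to $E\oplus(1-E)$, with scalar $(1,1)$-entry; the corner $ET(1-E)$ is simply left in place. \Cref{cor UT-spectrum} gives $\sigma(T+X)=\{\lambda'\}\cup\sigma_{(1-E)\mathcal{A}(1-E)}((1-E)T(1-E))$, and $\|X\|\leqslant|\lambda'-\lambda|+\|(T-\lambda)E\|$.

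Your isolation step is also left open: ``punch a hole'' and ``choose $\lambda'$ in a gap'' are not justified. What is missing is the order of choices. The paper takes $\lambda$ of maximal real part and $\lambda'=\lambda+\varepsilon/2$. Only then does it use upper semicontinuity to fix $\delta<\varepsilon/2$ such that every $A$ with $\|A-T\|<\delta$ has $\sigma(A)\subseteq\{\operatorname{Re}z<\operatorname{Re}\lambda+\varepsilon/2\}$. Alternatively, for any $\lambda\in\partial\sigma(T)$ you may first pick $\lambda'\notin\sigma(T)$ within $\varepsilon/2$ of $\lambda$, then choose $\delta$ so that $\lambda'\notin\sigma(A)$ whenever $\|A-T\|<\delta$.

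After that, choose $E$ with $\|(T-\lambda)E\|<\delta$ and apply the above to $A=\lambda E+T(1-E)$. This $A$ is within $\delta$ of $T$, and by the same triangularity $\sigma(A)=\{\lambda\}\cup\sigma((1-E)T(1-E))$. So the compact set $\sigma((1-E)T(1-E))$ misses $\lambda'$, and $\lambda'$ is isolated in $\sigma(T+X)$. Finally, you need $E\ne I$ (available since $\dim\mathcal{A}>1$) so that this corner spectrum is nonempty.
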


As a direct application of \Cref{thm main-RR0} and \Cref{lem sp-disconnected}, we obtain the following corollary.

\begin{corollary}
    Let $\mathcal{A}$ be a unital $C^*$-algebra of real rank zero with $\dim\mathcal{A}>1$.
    Then the set of elements in $\mathcal{A}$ with disconnected spectrum is open and dense.
\end{corollary}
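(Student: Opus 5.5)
The corollary has two parts: openness, which holds in any unital Banach algebra, and density, which is \Cref{thm main-RR0} specialized to $\mathcal{I}=\mathcal{A}$.

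\emph{Density.} The ideal $\mathcal{A}$ is essential in itself, since a unital algebra meets every nonzero ideal nontrivially. So we apply \Cref{thm main-RR0} with $\mathcal{I}=\mathcal{A}$. For every $T\in\mathcal{A}$ and $\varepsilon>0$ it gives $X\in\mathcal{A}$ with $\|X\|<\varepsilon$ such that $T+X$ has disconnected spectrum. Hence every norm ball in $\mathcal{A}$ contains an element with disconnected spectrum.

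\emph{Openness.} This is what \Cref{lem sp-disconnected} provides; I expect it to state that the set of elements with disconnected spectrum is norm open in a unital Banach algebra. If it needs to be proved directly, I would use upper semicontinuity of the spectrum together with the Riesz projection.

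Suppose $\sigma(T)=K_1\sqcup K_2$ with $K_1,K_2$ nonempty, disjoint and compact. Choose disjoint open sets $U_1\supset K_1$ and $U_2\supset K_2$, and set $\delta=\inf\{\|(T-\lambda)^{-1}\|^{-1}:\lambda\notin U_1\cup U_2\}$. This $\delta$ is positive because the resolvent norm tends to $0$ at infinity.

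If $\|S-T\|<\delta$, then $S-\lambda$ is invertible for all $\lambda\notin U_1\cup U_2$. Therefore $\sigma(S)\subset U_1\cup U_2$, and it remains to show that $\sigma(S)$ meets both $U_1$ and $U_2$. Take a contour $\Gamma$ in $U_1$ around $K_1$, and define the Riesz idempotent
\[
E(S)=\frac{1}{2\pi i}\int_\Gamma(\lambda-S)^{-1}\,d\lambda .
\]
The map $S\mapsto E(S)$ is norm continuous in $S$ on this ball, and $E(T)\neq 0$. Shrinking $\delta$ if necessary, we get $E(S)\neq0$, which forces $\sigma(S)\cap U_1\neq\emptyset$. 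The same argument applies to $U_2$.

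The step needing the most care is this nonemptiness of each piece of $\sigma(S)$. A nonzero Riesz idempotent guarantees it: if $\sigma(S)\cap U_1$ were empty, the integrand would be analytic inside $\Gamma$ and $E(S)$ would vanish.
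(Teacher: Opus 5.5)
Your proposal is correct and matches the paper's argument. Density is \Cref{thm main-RR0} applied with $\mathcal{I}=\mathcal{A}$, which is essential because $A\mathcal{A}=\{0\}$ forces $A=AI=0$; openness is \Cref{lem sp-disconnected}, whose proof in the paper uses the same upper-semicontinuity plus Riesz-idempotent argument you sketch. One small point in that sketch: since $\Gamma\subset U_1$, the inclusion $\sigma(S)\subset U_1\cup U_2$ does not by itself keep $\Gamma$ in the resolvent set of $S$, so you should also include $\lambda\in\Gamma$ in the infimum defining $\delta$ before asserting that $S\mapsto E(S)$ is defined and continuous on the ball.
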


This leads to the following question.

\begin{problem}\label{problem 1}
    In which unital $C^*$-algebras do elements with disconnected spectrum form a dense subset?
\end{problem}

This paper is organized as follows.
In \Cref{sec preliminary}, we prepare some basic properties of essential ideals, unitarily invariant norms and develop several technical tools from spectral theory.
In \Cref{subsec perturbation-VNA}, we prove \Cref{thm main} based on \Cref{prop main} and \Cref{prop main-converse}.
Then we prove \Cref{thm main-RR0} by applying \Cref{lem small-TE-RR0} and the proof of \Cref{prop main} in \Cref{subsec perturbation-RR0}.
Finally, regarding \Cref{problem 1}, we characterize a necessary and sufficient condition in \Cref{prop subset-R} for a compact subset $X$ of $\mathbb{R}$ such that elements with disconnected spectrum in $C(X)$ form a dense subset.

\section{Preliminaries}\label{sec preliminary}

In this paper, we denote by $\mathcal{M}$ a von Neumann algebra and every ideal is assumed to be \emph{two-sided}.
In von Neumann algebras, we need to deal with ideals that are not closed in the operator norm topology, such as the ideal of Schatten $p$-class operators in $\mathcal{B}(\mathcal{H})$ for every $p\geqslant 1$.
Since the tools we develop for ideals are meant to work for both von Neumann algebras and $C^*$-algebras, we adopt the convention that, unless indicated otherwise, ideals in this paper are not assumed to be closed in the norm topology.

\subsection{Essential ideals and unitarily invariant norms}\label{subsec essential-ideal}

\begin{definition}\label{def Phi}
    Let $\mathcal{M}$ be a von Neumann algebra and $\mathcal{I}$ an ideal in $\mathcal{M}$.
    A norm $\Phi$ on $\mathcal{I}$ is said to be {\em unitarily invariant} if $\Phi(UXV)=\Phi(X)$ for every operator $X\in\mathcal{I}$ and unitary operators $U,V\in\mathcal{M}$.
    Meanwhile, $\Phi$ is called {\em $\|\cdot\|$-dominating} if $\Phi(X)\geqslant \|X\|$ for all $X\in\mathcal{I}$.
\end{definition}

\begin{remark}\label{rem Phi}
    In \cite[Definition 2.1.1]{LSS20}, a norm $\Phi$ on $\mathcal{I}$ is said to be {\em $\|\cdot\|$-dominating} if there exists a positive scalar $\alpha>0$ such that $\Phi(X)\geqslant \alpha \|X\|$ for all $X\in\mathcal{I}$.
    Without loss of generality, we will assume that $\alpha = 1$ in this paper.
\end{remark}

For the reader's convenience, we list some useful properties of unitarily invariant norms from \cite[Lemma 2.1.5]{LSS20}.

\begin{lemma}\label{lem 2.1.5}
    Let $\mathcal{M}$ be a von Neumann algebra, $\mathcal{I}$ an ideal in $\mathcal{M}$, and $\Phi$ a unitarily invariant norm on $\mathcal{I}$.
    \begin{enumerate} [label= $(\arabic*)$, ref= \arabic*, leftmargin=*] % labelsep = 1 em, start = 1
        \item \label{item uin1}  $\Phi(AXB)\leqslant\|A\|\Phi(X)\|B\|$ for all $X\in\mathcal{I}$ and $A,B\in\mathcal{M}$.
        \item \label{item uin2}  For all $X\in\mathcal{I}$, we have $|X|,X^*\in\mathcal{I}$, and $\Phi(X)=\Phi(|X|)=\Phi(X^*)$.
        \item \label{item uin3}  If $X \in\mathcal{M}$, $Y\in\mathcal{I}$, and $0\leqslant X\leqslant Y$, then $X\in\mathcal{I}$ and $\Phi(X)\leqslant\Phi(Y)$.
    \end{enumerate}
\end{lemma}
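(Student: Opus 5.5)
Although the lemma is quoted from \cite[Lemma 2.1.5]{LSS20}, we sketch a self-contained argument. The plan is to derive all three items from unitarily invariance together with the fact that every operator in $\mathcal{M}$ is a linear combination of unitaries in $\mathcal{M}$, with norm control.

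For \eqref{item uin1}, we first treat $B=I$ and $\|A\|\leqslant 1$. The key tool is the standard fact that an element of the closed unit ball of a unital $C^*$-algebra, when it has norm strictly less than one, is a mean of finitely many unitaries in the algebra. One quick route is to write a self-adjoint contraction $H$ as $\tfrac12(U+U^*)$ with $U=H+i(I-H^2)^{1/2}$. For general $A$ with $\|A\|<1$, the Russo--Dye/Kadison--Pedersen theorem gives $A=\frac1n\sum_{k}U_k$ with unitaries $U_k\in\mathcal{M}$. Since $\mathcal{I}$ is an ideal, each $U_kX$ lies in $\mathcal{I}$, so the triangle inequality and unitary invariance give $\Phi(AX)\leqslant\frac1n\sum_k\Phi(U_kX)=\Phi(X)$. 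Scaling by $\|A\|$, and approximating norm one by $(1-\delta)A$ with $\delta\to0$, yields $\Phi(AX)\leqslant\|A\|\Phi(X)$. The same argument on the right gives $\Phi(XB)\leqslant\Phi(X)\|B\|$, and combining the two gives \eqref{item uin1}.

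For \eqref{item uin2}, we use the polar decomposition $X=V|X|$ with $V\in\mathcal{M}$ a partial isometry, so that $|X|=V^*X\in\mathcal{I}$. Applying \eqref{item uin1} twice gives $\Phi(|X|)\leqslant\Phi(X)$ and $\Phi(X)=\Phi(V|X|)\leqslant\Phi(|X|)$. For the adjoint, $X^*=|X|V^*$ lies in $\mathcal{I}$. Then $\Phi(X^*)\leqslant\Phi(|X|)=\Phi(X)$, and applying the same inequality to $X^*$ in place of $X$ gives equality.

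For \eqref{item uin3}, the main obstacle is producing a factorization $X=CYC^*$ or $X^{1/2}=CY^{1/2}$ with a contraction $C\in\mathcal{M}$. By Douglas' lemma in the von Neumann algebra, $0\leqslant X\leqslant Y$ yields a contraction $C\in\mathcal{M}$ with $X^{1/2}=CY^{1/2}$. Concretely, $C$ is defined on the range of $Y^{1/2}$ by $Y^{1/2}\xi\mapsto X^{1/2}\xi$, which is contractive, and is extended by zero. It commutes with the commutant, hence belongs to $\mathcal{M}$. Then $X=CYC^*$, so $X\in\mathcal{I}$ because $\mathcal{I}$ is an ideal, and \eqref{item uin1} gives $\Phi(X)\leqslant\|C\|^2\Phi(Y)\leqslant\Phi(Y)$.
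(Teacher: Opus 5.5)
The paper does not prove this lemma; it quotes it from \cite[Lemma 2.1.5]{LSS20} and uses it as a black box. So there is no in-paper argument to compare with, and your self-contained proof is correct.

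In item (1), the one nontrivial external input is the Kadison--Pedersen theorem: an element of norm $<1$ in a unital $C^*$-algebra is a finite mean of unitaries of that algebra. This is what gives the sharp constant $1$. The cheaper route (Cartesian decomposition plus $H=\tfrac12(U+U^*)$ for self-adjoint contractions) only yields $\Phi(AX)\leqslant 2\|A\|\Phi(X)$. Your passage through $(1-\delta)A$ is also genuinely needed rather than cosmetic. A norm-one element such as a non-unitary isometry is an extreme point of the unit ball, hence not a mean of unitaries, so some limiting step of this kind is required.

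Items (2) and (3) are handled correctly:
\begin{itemize}
\item In item (2), the identities $|X|=V^*X$ and $X^*=|X|V^*$ place both operators in the two-sided ideal. Item (1) then gives the two-sided inequalities.
\item In item (3), your Douglas factorization $X^{1/2}=CY^{1/2}$ is sound. The closed range of $Y^{1/2}$ and its orthogonal complement are invariant under $\mathcal{M}'$, and $\mathcal{M}'$ commutes with $X^{1/2}$ and $Y^{1/2}$. Hence $C$ commutes with $\mathcal{M}'$, so $C\in\mathcal{M}''=\mathcal{M}$, $X=CYC^*\in\mathcal{I}$, and $\Phi(X)\leqslant\|C\|^2\Phi(Y)\leqslant\Phi(Y)$.
\end{itemize}

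What your version buys over the bare citation is visibility of where the von Neumann algebra structure enters. Polar decomposition inside $\mathcal{M}$ and the double commutant theorem are both unavailable in a general $C^*$-algebra. This is exactly why self-adjointness of non-closed ideals can fail there, as in \Cref{eg non-self-adjoint-ideal}.
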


Let $\mathcal{A}$ be a unital $C^*$-algebra and $\mathcal{A}_{sa}$ the set of self-adjoint operators in $\mathcal{A}$.
According to \cite[Theorem 2.6]{BP91}, we say that $\mathcal{A}$ has \emph{real rank zero} if and only if the set of self-adjoint operators with finite spectrum is dense in $\mathcal{A}_{sa}$.
Note that every von Neumann algebra has real rank zero by \cite[Proposition 1.3]{BP91}.
By \Cref{lem 2.1.5} \eqref{item uin2}, every ideal in a von Neumann algebra is self-adjoint.
The same conclusion is not true for $C^*$-algebras if the ideal is not assumed to be closed.
We include the following example to illustrate this point.

\begin{example}\label{eg non-self-adjoint-ideal}
    Let $\mathcal{A}$ be the $C^*$-algebra of all convergent sequences in $\ell^\infty$, $\theta$ an irrational number, and $X=\{\frac{1}{n}e^{2\pi i n\theta}\}_{n=1}^{\infty}\in\mathcal{A}$.
    Since $\mathcal{A}$ is commutative, $\mathcal{A}X$ is an ideal in $\mathcal{A}$.
    We claim that $\mathcal{A}X$ is not self-adjoint.
    Otherwise, we have $X^*=XY$ for some $Y=\{y_n\}_{n=1}^{\infty}\in\mathcal{A}$.
    It follows that $y_n=e^{-4\pi in\theta}$ and hence $Y$ is not convergent.
    That is a contradiction.
\end{example}

Let $\mathcal{A}$ be a unital $C^*$-algebra and $\mathcal{I}$ an ideal in $\mathcal{A}$.
Then the closure $\overline{\mathcal{I}}$ of $\mathcal{I}$ is a closed ideal in $\mathcal{A}$.
By \cite[Lemma I.5.1]{Davidson-book}, $\overline{\mathcal{I}}$ is self-adjoint.
Thus, $A\mathcal{I}= \{0\}$ if and only if $\mathcal{I}A^*= \{0\}$ for every operator $A\in\mathcal{A}$.
In the following definition, essential ideals are not assumed to be closed in the operator norm topology.

\begin{definition}\label{def essential}
    Let $\mathcal{A}$ be a unital $C^*$-algebra.
    An ideal $\mathcal{I}$ of $\mathcal{A}$ is said to be \emph{essential} if $A\mathcal{I}= \{0\}$ implies that $A=0$ for every operator $A\in\mathcal{A}$; equivalently, $\mathcal{I}A= \{0\}$ implies that $A=0$ for every operator $A\in\mathcal{A}$.
\end{definition}

In contrast with \cite[Definition 3.12.7]{Ped79}, we present an auxiliary lemma for the reader's convenience.
For subsets $\mathcal{S}_1$ and $\mathcal{S}_2$ in $\mathcal{A}$, denote by $\mathcal{S}_1\mathcal{S}_2$ the set of all products $A_1A_2$ for $A_j \in \mathcal{S}_j$ and $j=1,2$.

\begin{lemma}
    Let $\mathcal{A}$ be a unital $C^*$-algebra and $\mathcal{I}$ an ideal in $\mathcal{A}$.
    Then the following statements are equivalent.
    \begin{enumerate} [label= $(\arabic*)$, ref= \arabic*, leftmargin=*] % labelsep = 1 em, start = 1
        \item \label{item ess-ideal1}  $\mathcal{I}$ is an essential ideal in $\mathcal{A}$.
        \item \label{item ess-ideal2}  $\mathcal{I} \cap \mathcal{J} \ne \{0\}$ for every nonzero ideal $\mathcal{J}$ in $\mathcal A$.
    \end{enumerate}
\end{lemma}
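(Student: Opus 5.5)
We prove the two implications separately; both reduce to producing a nonzero element in a suitable product of ideals. The main tool is the observation preceding \Cref{def essential}: for every $A\in\mathcal{A}$, $A\mathcal{I}=\{0\}$ if and only if $\mathcal{I}A^*=\{0\}$. This holds because the norm closure $\overline{\mathcal{I}}$ is a self-adjoint closed ideal, and multiplication is norm continuous.

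For \eqref{item ess-ideal1}$\Rightarrow$\eqref{item ess-ideal2}, let $\mathcal{J}$ be a nonzero ideal and choose $0\ne A\in\mathcal{J}$. Since $\mathcal{I}$ is essential, $A\mathcal{I}\ne\{0\}$, so there is $X\in\mathcal{I}$ with $AX\ne 0$. Because $\mathcal{J}$ and $\mathcal{I}$ are two-sided ideals, $AX$ lies in $\mathcal{J}$ (as $A\in\mathcal{J}$) and in $\mathcal{I}$ (as $X\in\mathcal{I}$). Hence $0\ne AX\in\mathcal{I}\cap\mathcal{J}$.

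For \eqref{item ess-ideal2}$\Rightarrow$\eqref{item ess-ideal1}, suppose $A\in\mathcal{A}$ satisfies $A\mathcal{I}=\{0\}$, and suppose for contradiction that $A\ne0$. Consider the two-sided ideal generated by $A$,
\[
\mathcal{J}=\Big\{\sum_{k=1}^n B_kAC_k\colon B_k,C_k\in\mathcal{A},\ n\in\mathbb{N}\Big\},
\]
which is nonzero because $\mathcal{A}$ is unital. First we show that $\mathcal{J}\mathcal{I}=\{0\}$, i.e. $AC\mathcal{I}=\{0\}$ for every $C\in\mathcal{A}$. Indeed, $C\mathcal{I}\subseteq\mathcal{I}$, so $AC\mathcal{I}\subseteq A\mathcal{I}=\{0\}$, and left multiplication by the $B_k$ then preserves $0$. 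By \eqref{item ess-ideal2} there is a nonzero $Y\in\mathcal{I}\cap\mathcal{J}$, and then $Y\mathcal{I}=\{0\}$.

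The main point is to extract a contradiction from a nonzero $Y\in\mathcal{I}$ with $Y\mathcal{I}=\{0\}$, since $\mathcal{I}$ need not be self-adjoint (\Cref{eg non-self-adjoint-ideal}). Here we use the observation above: $Y\mathcal{I}=\{0\}$ gives $\mathcal{I}Y^*=\{0\}$. Since $Y\in\mathcal{I}$, this yields $YY^*=0$, so $\|Y\|^2=\|YY^*\|=0$ and $Y=0$, a contradiction. Therefore $A=0$ and $\mathcal{I}$ is essential.

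The equivalent right-sided formulation in \Cref{def essential} follows in the same way, again because $A\mathcal{I}=\{0\}$ if and only if $\mathcal{I}A^*=\{0\}$.
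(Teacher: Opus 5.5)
Your proof is correct and follows essentially the paper's route: for $(1)\Rightarrow(2)$ a product of an element of $\mathcal{J}$ with $\mathcal{I}$, and for $(2)\Rightarrow(1)$ the principal ideal generated by the given element, intersected with $\mathcal{I}$, combined with the $C^*$-identity. The one difference is that the paper uses the ideal generated by $A^*$. Then a nonzero $B\in\mathcal{I}\cap\mathcal{J}$ gives $0\ne BB^*\in\mathcal{I}B^*$, hence $\mathcal{I}A\ne\{0\}$ directly, without your appeal to the self-adjointness of $\overline{\mathcal{I}}$. You could also avoid that appeal by noting that $Y^*Y\in\mathcal{I}$, so $YY^*Y=0$ and hence $Y=0$.
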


\begin{proof}
    $\eqref{item ess-ideal1}\Rightarrow \eqref{item ess-ideal2}$.
    Let $\mathcal{J} \subseteq \mathcal{A}$ be a nonzero ideal and choose $A \in \mathcal{J}$ with $A\ne 0$.
    Since $\mathcal{I}$ is essential, we have $\mathcal{I}A \ne \{0\}$, and hence $\mathcal{IJ} \ne \{0\}$.
    This together with $\mathcal{IJ} \subseteq \mathcal{I}\cap \mathcal{J}$ yields that $\mathcal{I}\cap \mathcal{J} \ne \{0\}$.

    $\eqref{item ess-ideal2}\Rightarrow \eqref{item ess-ideal1}$.
    Let $A\in \mathcal{A}$ with $A\ne 0$ and  $\mathcal{J} =\mathrm{span}\mathcal{A} A^* \mathcal{A}$.
    Then $\mathcal{J}$ is a nonzero ideal.
    From $\eqref{item ess-ideal2}$, there is a nonzero element $B$ in $\mathcal{I} \cap \mathcal{J} $ of the form $B = \sum_j X_j A^* Y_j$.
    Since $BB^*\ne 0$, we have $\mathcal{I}B^*\ne \{0\}$, and hence $\mathcal{I}A\ne \{0\}$.
    Thus, $\mathcal{I}$ is essential.
\end{proof}

The following lemma is prepared for \Cref{def f-Phi}.

\begin{lemma}\label{lem E<P-RR0}
    Let $\mathcal{A}$ be a unital $C^*$-algebra of real rank zero and $\mathcal{I}$ an essential ideal in $\mathcal{A}$.
    Then for every nonzero projection $P$ in $\mathcal{A}$, there exists a nonzero projection $E$ in $\mathcal{I}$ such that $E\leqslant P$.
\end{lemma}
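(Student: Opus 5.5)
Since $P\neq 0$ and $\mathcal{I}$ is essential, we have $\mathcal{I}P\neq\{0\}$. Choose $X\in\mathcal{I}$ with $XP\neq 0$. Then $A:=PX^*XP$ is a nonzero positive element of $\mathcal{I}$: it is nonzero because $\|A\|=\|XP\|^2$, it lies in $\mathcal{I}$ since $\mathcal{I}$ is a two-sided ideal and $X\in\mathcal{I}$, and it belongs to the corner $P\mathcal{A}P$. The goal is to extract from $A$ a nonzero projection lying under $P$ and belonging to $\mathcal{I}$. Since $\mathcal{I}$ need not be closed, we cannot simply take a spectral projection of $A$ and argue by closure. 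Instead we must produce the projection as a product of $A$ with elements of $\mathcal{A}$.

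\textbf{Using real rank zero.} The corner $P\mathcal{A}P$ is a unital $C^*$-algebra of real rank zero, because hereditary subalgebras of real rank zero algebras have real rank zero \cite[Theorem 2.5]{BP91}. Hence the self-adjoint element $A$ can be approximated in norm by self-adjoint elements $B\in P\mathcal{A}P$ with finite spectrum. Fix $\delta=\|A\|/4$ and choose such a $B$ with $\|A-B\|<\delta$. Then the largest eigenvalue $\lambda$ of $B$ satisfies $\lambda>\|A\|-\delta$. Let $E_0\in P\mathcal{A}P$ be the spectral projection of $B$ for the spectral set $\sigma(B)\cap[2\delta,\infty)$. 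This projection is nonzero, satisfies $E_0\leqslant P$, and $E_0BE_0\geqslant 2\delta E_0$. Therefore
\[
E_0AE_0\geqslant E_0BE_0-\delta E_0\geqslant \delta E_0 .
\]
So $C:=E_0AE_0$ is invertible in the corner $E_0\mathcal{A}E_0$.

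\textbf{Conclusion.} Let $D$ be the inverse of $C$ in $E_0\mathcal{A}E_0$, so $D\in\mathcal{A}$ and $DC=E_0$. Since $C=E_0AE_0\in\mathcal{I}$ by the ideal property, we get $E_0=DC\in\mathcal{I}$. Setting $E:=E_0$ gives a nonzero projection in $\mathcal{I}$ with $E\leqslant P$, as required.

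The main obstacle is that $\mathcal{I}$ is not norm-closed, so approximation arguments cannot place anything in $\mathcal{I}$ directly. The trick above handles this. We approximate only in order to find the projection $E_0$ in $\mathcal{A}$, and then obtain membership in $\mathcal{I}$ algebraically, by inverting the compression $E_0AE_0$ inside the corner $E_0\mathcal{A}E_0$. The remaining step to check carefully is that $E_0\neq 0$ and that the estimate $E_0AE_0\geqslant\delta E_0$ holds. Both follow from the finite-spectrum functional calculus for $B$ together with $\|A-B\|<\delta$.
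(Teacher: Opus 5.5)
Your proof is correct, but its key step differs from the paper's.

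You compress first and then approximate. You apply the finite-spectrum property inside the corner $P\mathcal{A}P$, which requires the extra fact, due to Brown and Pedersen, that real rank zero passes to corners; this is true and standard, but it is additional input. Your projection $E_0$ is then a spectral projection of the approximant $B$, not of any element of $\mathcal{I}$. You get $E_0\in\mathcal{I}$ by inverting $E_0AE_0\geqslant\delta E_0$ in $E_0\mathcal{A}E_0$.

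The paper approximates first and compresses afterwards, so it uses the finite-spectrum property only in $\mathcal{A}$ itself:
it approximates $T^*PT$ (with $T\in\mathcal{I}$, $PT\neq 0$) and takes the spectral projection $P_n$ of the approximant for its top eigenvalue, so that $\sigma(P_nT^*PTP_n)\subseteq\{0\}\cup[\frac{1}{2}\lambda_n,\frac{3}{2}\lambda_n]$;
the identity $\sigma(xy)\setminus\{0\}=\sigma(yx)\setminus\{0\}$ with $x=PTP_n$, $y=P_nT^*P$ then moves this gap into the corner;
hence $A=PTP_nT^*P\in\mathcal{I}\cap P\mathcal{A}P$ itself has a spectral gap at $0$, and $E=\chi(A)=Af(A)$ lies in $\mathcal{I}$ directly.

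The paper's route is thus self-contained, needing only the density characterization for $\mathcal{A}$ and an elementary spectral identity. Yours is shorter once corner inheritance is granted, and it isolates a reusable principle: any projection $E$ with $EAE\geqslant\delta E$ for some $A\in\mathcal{I}$ and $\delta>0$ belongs to $\mathcal{I}$, however $E$ was produced.

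The two mechanisms also meet. Put $V=PTP_n$. Applying your inversion trick to $V^*V\geqslant\frac{1}{2}\lambda_nP_n$, with the inverse taken in $P_n\mathcal{A}P_n$, gives exactly the paper's projection as $E=V(V^*V)^{-1}V^*=\chi(VV^*)$. So you could have avoided the corner theorem altogether by combining your trick with the paper's approximation of $T^*PT$ in $\mathcal{A}$.
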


\begin{proof}
    Since $\mathcal{I}$ is essential, there exists an operator $T$ in $\mathcal{I}$ such that $PT\ne 0$.
    Since $\mathcal{A}$ has real rank zero, there are nonzero projections $\{P_j\}_{j=1}^n$ in $\mathcal{A}$ with sum $I$ and scalars $0\leqslant\lambda_1\leqslant\cdots\leqslant\lambda_n=\|PT\|^2$ such that
    \begin{equation*}
        \|T^*PT-(\lambda_1P_1+\cdots+\lambda_nP_n)\|<\frac{1}{2}\lambda_n.
    \end{equation*}
    It follows that $\|P_nT^*PTP_n-\lambda_nP_n\|<\frac{1}{2}\lambda_n$.
    Hence $P_nT^*PTP_n$ is nonzero and
    \begin{equation*}
        \sigma(P_nT^*PTP_n)\subseteq\{0\}\cup
        \left[\frac{1}{2}\lambda_n,\frac{3}{2}\lambda_n\right].
    \end{equation*}
    By the well-known equality $\sigma(AB)\setminus\{0\}=\sigma(BA)\setminus\{0\}$ for all operators $A$ and $B$, we obtain that
    \begin{equation*}
        \sigma(PTP_nT^*P)\subseteq\{0\}\cup
        \left[\frac{1}{2}\lambda_n,\frac{3}{2}\lambda_n\right].
    \end{equation*}
    Moreover, $A=PTP_nT^*P\in\mathcal{I}$ is nonzero.
    Let $\chi$ be the characteristic function of the closed interval $[\frac{1}{2}\lambda_n,\frac{3}{2}\lambda_n]$, $f(t)=t^{-1}\chi(t)$, and $E=\chi(A)=Af(A)$.
    Then $E$ is a nonzero projection in $\mathcal{I}$ with $E\leqslant P$.
    This completes the proof.
\end{proof}

By the following consequence of \Cref{lem E<P-RR0}, we will use the terminology {\em weak-operator dense ideals} instead of {\em essential ideals} in von Neumann algebras.

\begin{corollary}\label{lem essential=dense}
    Let $\mathcal{M}$ be a von Neumann algebra.
    Then an ideal in $\mathcal{M}$ is essential if and only if it is weak-operator dense.
\end{corollary}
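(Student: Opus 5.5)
We prove both implications separately, using \Cref{lem E<P-RR0} for one direction and the duality between central projections and the commutant for the other.

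\emph{Essential implies weak-operator dense.} Let $\mathcal{I}$ be an essential ideal in $\mathcal{M}$ and let $\overline{\mathcal{I}}^{w}$ be its weak-operator closure. This closure is a weak-operator closed two-sided ideal in $\mathcal{M}$, so $\overline{\mathcal{I}}^{w}=\mathcal{M}Q$ for some central projection $Q$ in $\mathcal{M}$. Suppose $Q\neq I$ and set $P=I-Q$, a nonzero projection. Since every von Neumann algebra has real rank zero, \Cref{lem E<P-RR0} gives a nonzero projection $E\in\mathcal{I}$ with $E\leqslant P$. But $E\in\mathcal{I}\subseteq\mathcal{M}Q$, so $E=EQ$, while $E\leqslant P=I-Q$ gives $EQ=0$. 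Hence $E=0$, a contradiction. Therefore $Q=I$ and $\mathcal{I}$ is weak-operator dense.

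Alternatively, one can avoid \Cref{lem E<P-RR0} here. Since $P\mathcal{I}\subseteq P\mathcal{M}Q=\mathcal{M}PQ=\{0\}$, essentiality forces $P=0$ directly.

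\emph{Weak-operator dense implies essential.} Let $\mathcal{I}$ be weak-operator dense and let $A\in\mathcal{M}$ satisfy $A\mathcal{I}=\{0\}$. Left multiplication $X\mapsto AX$ is weak-operator continuous, so $A\overline{\mathcal{I}}^{w}=\{0\}$. Since $I\in\mathcal{M}=\overline{\mathcal{I}}^{w}$, this gives $A=AI=0$. Thus $\mathcal{I}$ is essential.

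The only step requiring care is the structural fact used in the first direction: a weak-operator closed ideal of a von Neumann algebra has the form $\mathcal{M}Q$ for a central projection $Q$ (see \cite[Theorem 6.8.8]{KR2}). To see this, note that such an ideal $\mathcal{J}$ is self-adjoint by \Cref{lem 2.1.5}\eqref{item uin2}, hence is a weak-operator closed $*$-subalgebra. By Kaplansky density it has an increasing approximate unit converging strongly to a projection $Q\in\mathcal{J}$, and then $\mathcal{J}=\mathcal{M}Q=Q\mathcal{M}$. Because $\mathcal{J}$ is a two-sided ideal, $QX=QXQ=XQ$ for every $X\in\mathcal{M}$, so $Q$ is central.
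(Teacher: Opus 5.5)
Your argument is correct, but it takes a different route from the paper. The paper never passes to the weak-operator closure. It chooses a maximal orthogonal family $\{E_a\}$ of nonzero projections in $\mathcal{I}$. It then uses \Cref{lem E<P-RR0} to show $\sum_a E_a=I$: a nonzero $I-\sum_a E_a$ would dominate another nonzero projection of $\mathcal{I}$, contradicting maximality. The finite partial sums $E_\Lambda\in\mathcal{I}$ then increase strongly to $I$, so $TE_\Lambda\to T$ with $TE_\Lambda\in\mathcal{I}$ for every $T\in\mathcal{M}$.

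You instead import the structure theorem $\overline{\mathcal{I}}^{w}=\mathcal{M}Q$ with $Q$ central. After that, your second variant shows that essentiality alone forces the central annihilator $I-Q$ to vanish, so \Cref{lem E<P-RR0} is not needed at all.

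Your version isolates the real content and is shorter once the structure theorem is granted. The paper's version avoids that theorem, whose proof needs an approximate-unit argument of the same flavour anyway. It also yields slightly more: an explicit increasing net of projections in $\mathcal{I}$ converging strongly to $I$.

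One small correction to your sketch of the structure theorem. The increasing approximate unit of $\mathcal{J}$ is not a consequence of Kaplansky's density theorem. It comes from general $C^*$-algebra theory, and its strong limit lies in $\mathcal{J}$ because $\mathcal{J}$ is weak-operator closed. Alternatively, $Q\in\mathcal{J}$ follows from the double commutant theorem applied on the closed span of $\mathcal{J}\mathcal{H}$.
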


\begin{proof}
    It is clear that every weak-operator dense ideal is essential.
    Suppose that $\mathcal{I}$ is an essential ideal in $\mathcal{M}$.
    Let $\{E_a\}_{a\in \mathbb{A}}$ be a maximal orthogonal family of nonzero projections in $\mathcal{I}$.
    Let $P = I-\sum_{a\in \mathbb{A}}E_a$.
    If $P\ne 0$, then there exists a nonzero projection $E_0$ in $\mathcal{I}$ such that $E_0\leqslant P$ by \Cref{lem E<P-RR0}.
    That contradicts the maximality of $\{E_a\}_{a\in \mathbb{A}}$.
    It follows that $I=\sum_{a\in \mathbb{A}}E_a$.
    We put $E_\Lambda=\sum_{a\in\Lambda}E_a\in\mathcal{I}$ for every finite subset $\Lambda\subseteq \mathbb{A}$.
    Then for every $T$ in $\mathcal{M}$, $TE_\Lambda$ is weak-operator convergent to $T$ as $\Lambda\to \mathbb{A}$.
    This completes the proof.
\end{proof}

For each unitarily invariant norm $\Phi$ on a weak-operator dense ideal in a von Neumann algebra, we define a constant $c_\Phi$ associated with $\Phi$ as follows.

\begin{definition}\label{def f-Phi}
    Let $\mathcal{M}$ be a von Neumann algebra, $\mathcal{I}$ a weak-operator dense ideal in $\mathcal{M}$, and $\Phi$ a unitarily invariant norm on $\mathcal{I}$.
    For every nonzero projection $P$ in $\mathcal{M}$, we put
    \begin{equation*}
        f_\Phi(P)=\inf\{\Phi(E)\colon E\in\mathcal{I}~\text{is a nonzero subprojection of}~P\}.
    \end{equation*}
    The constant $c_\Phi$ is defined as
    \begin{equation*}
        c_\Phi = \sup\{f_\Phi(P)\colon P~\text{is a nonzero projection in}~\mathcal{M}\}.
    \end{equation*}
\end{definition}

\begin{remark}\label{rem f-Phi}
    Note that $0\leqslant f_\Phi(P)<\infty$ for every nonzero projection $P$ in $\mathcal{M}$.
    By definition, $f_\Phi$ is decreasing, i.e., $f_\Phi(P)\geqslant f_\Phi(Q)$ for all nonzero projections $P$ and $Q$ in $\mathcal{M}$ with $P\leqslant Q$.
    Moreover, if $c_\Phi<\infty$, $\delta>0$, and $P$ is a nonzero projection in $\mathcal{M}$, then there exists a nonzero projection $E$ in $\mathcal{I}$ such that $E\leqslant P$ and $\Phi(E)<c_\Phi+\delta$. This fact will be used tacitly in what follows.
\end{remark}

For every projection $P$ in $\mathcal{M}$, its {\em central support} is denoted by $C_P$.
By the following lemma, the value of $f_\Phi$ at every nonzero projection $P$ in $\mathcal{M}$ only depends on $C_P$, i.e., $f_\Phi(P)=f_\Phi(Q)$ for all nonzero projections $P$ and $Q$ in $\mathcal{M}$ with $C_P=C_Q$.

\begin{lemma}\label{lem carrier}
    Let $\mathcal{M}$ be a von Neumann algebra, $\mathcal{I}$ a weak-operator dense ideal in $\mathcal{M}$, and $\Phi$ a unitarily invariant norm on $\mathcal{I}$.
    Then $f_\Phi(P)=f_\Phi(C_P)$ for every nonzero projection $P$ in $\mathcal{M}$.
\end{lemma}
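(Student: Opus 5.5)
Since $P\leqslant C_P$, \Cref{rem f-Phi} gives $f_\Phi(P)\geqslant f_\Phi(C_P)$, so the plan is to prove the reverse inequality $f_\Phi(P)\leqslant f_\Phi(C_P)$. Fix $\delta>0$ and take a nonzero projection $E\in\mathcal{I}$ with $E\leqslant C_P$ and $\Phi(E)<f_\Phi(C_P)+\delta$. I will produce a nonzero subprojection $F\leqslant P$ with $F\in\mathcal{I}$ and $\Phi(F)\leqslant\Phi(E)$. Letting $\delta\to 0$ then finishes the proof.

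The construction of $F$ uses comparison of projections. Since $E\leqslant C_P$ and $E\ne 0$, we have $EC_P=E\ne0$, so $E$ and $P$ are not centrally orthogonal. By the standard fact (e.g. \cite[Proposition 6.1.8]{KR2}), there then exist nonzero subprojections $E_1\leqslant E$ and $F\leqslant P$ with $E_1\sim F$ in $\mathcal{M}$. Let $V\in\mathcal{M}$ be a partial isometry with $V^*V=E_1$ and $VV^*=F$.

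Next I check that $F\in\mathcal{I}$ and estimate its norm. We have $F=VE_1V^*=VEV^*$, since $E_1=E_1EE_1$ and $V=VE_1$; as $\mathcal{I}$ is an ideal, this gives $F\in\mathcal{I}$. By \Cref{lem 2.1.5}~\eqref{item uin1}, $\Phi(F)=\Phi(VEV^*)\leqslant\|V\|\,\Phi(E)\,\|V^*\|\leqslant\Phi(E)$. Alternatively, $0\leqslant E_1\leqslant E$ together with \Cref{lem 2.1.5}~\eqref{item uin3} gives $\Phi(E_1)\leqslant\Phi(E)$. Hence $f_\Phi(P)\leqslant\Phi(F)<f_\Phi(C_P)+\delta$.

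The only substantive step is producing the equivalent nonzero subprojections from non-central-orthogonality. If one prefers to avoid citing it, the direct argument goes as follows. Since $C_P$ is the projection onto $[\mathcal{M}P\mathcal{H}]$ and $E\leqslant C_P$, there is a unitary $U\in\mathcal{M}$ with $EUP\ne0$. The polar decomposition of $EUP$ then gives a partial isometry from a nonzero subprojection of $P$ onto a nonzero subprojection of $E$, which is exactly what is needed.
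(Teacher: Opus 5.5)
Your proof is correct and is essentially the paper's argument: choose $E\in\mathcal{I}$ with $0\ne E\leqslant C_P$ and $\Phi(E)<f_\Phi(C_P)+\delta$, use \cite[Proposition 6.1.8]{KR2} to get a nonzero partial isometry $V$ with $V^*V\leqslant E$ and $VV^*\leqslant P$, and conclude from $\Phi(VV^*)=\Phi(VEV^*)\leqslant\Phi(E)$ via \Cref{lem 2.1.5}~\eqref{item uin1}; you also make explicit why $VV^*\in\mathcal{I}$, which the paper leaves tacit. One small point concerns only your aside: $\Phi(E_1)\leqslant\Phi(E)$ by itself does not bound $\Phi(F)$ unless you also note $\Phi(VV^*)=\Phi(V^*V)$, which follows from \Cref{lem 2.1.5}~\eqref{item uin2}, but your main line does not depend on the aside.
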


\begin{proof}
    It is clear that $f_\Phi(P)\geqslant f_\Phi(C_P)$ as $f_\Phi$ is decreasing.
    For any $\varepsilon>0$, there exists a nonzero projection $E$ in $\mathcal{I}$ such that $E\leqslant C_P$ and $\Phi(E)<f_\Phi(C_P)+\varepsilon$.
    Since $C_EC_P=C_E\ne 0$, there exists a nonzero partial isometry $V$ in $\mathcal{M}$ such that $V^*V\leqslant E$ and $VV^*\leqslant P$ by \cite[Proposition 6.1.8]{KR2}.
    From \Cref{lem 2.1.5} \eqref{item uin1}, we have
    \begin{equation*}
        \Phi(VV^*)=\Phi(VEV^*)\leqslant\Phi(E)<f_\Phi(C_P)+\varepsilon.
    \end{equation*}
    It follows that $f_\Phi(P)<f_\Phi(C_P)+\varepsilon$.
    This completes the proof.
\end{proof}

The following two lemmas are prepared for \Cref{prop main-converse}.

\begin{lemma}\label{lem c=infinite}
    Let $\mathcal{M}$ be a von Neumann algebra, $\mathcal{I}$ a weak-operator dense ideal in $\mathcal{M}$, and $\Phi$ a unitarily invariant norm on $\mathcal{I}$ with $c_\Phi=\infty$.
    Then for every integer $n\geqslant 1$, there exists a central projection $Z$ in $\mathcal{M}$ with $0<Z<I$ such that $f_\Phi(Z)\geqslant n$ and
    \begin{equation*}
        \sup\{f_\Phi(P)\colon P~\text{is a nonzero central projection in}~\mathcal{M}(I-Z)\}=\infty.
    \end{equation*}
\end{lemma}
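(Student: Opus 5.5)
The plan is to work entirely with central projections, using \Cref{lem carrier} to reduce everything about $f_\Phi$ to central supports. The first thing I will establish is a ``min formula'': if $Z_1,Z_2$ are orthogonal central projections with sum $Z_1+Z_2\neq 0$ (taking $f_\Phi(0)=\infty$), then
\begin{equation*}
    f_\Phi(Z_1+Z_2)=\min\{f_\Phi(Z_1),f_\Phi(Z_2)\}.
\end{equation*}
The inequality ``$\leqslant$'' holds because $f_\Phi$ is decreasing (\Cref{rem f-Phi}). For ``$\geqslant$'', take a nonzero projection $E\in\mathcal{I}$ with $E\leqslant Z_1+Z_2$. Then $E=EZ_1+EZ_2$, and each $EZ_j$ is a projection (since $Z_j$ is central) with $EZ_j\leqslant E$ and $EZ_j\leqslant Z_j$. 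At least one $EZ_j$ is nonzero, and \Cref{lem 2.1.5} \eqref{item uin3} gives $EZ_j\in\mathcal{I}$ and $\Phi(EZ_j)\leqslant\Phi(E)$. Hence $\Phi(E)\geqslant\min_j f_\Phi(Z_j)$.

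From the min formula and \Cref{lem carrier} I get a dichotomy. Write $s(Z)$ for the supremum of $f_\Phi(V)$ over nonzero central projections $V\leqslant Z$. For every central projection $Z$ one has $s(I)\leqslant\max\{s(Z),s(I-Z)\}$. Indeed, any nonzero central $V$ splits as $V=VZ+V(I-Z)$, and each nonzero piece has $f_\Phi\geqslant f_\Phi(V)$. More generally $s(Z)=\infty$ and $Z=Z'+Z''$ with $Z',Z''$ central force $s(Z')=\infty$ or $s(Z'')=\infty$. Since $c_\Phi=\infty$ and $f_\Phi(P)=f_\Phi(C_P)$, we have $s(I)=\infty$.

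Now fix $n$. Because $f_\Phi(I)<\infty$ (\Cref{rem f-Phi}), I choose a central $Z_0$ with $f_\Phi(Z_0)\geqslant\max\{n,f_\Phi(I)+1\}$, which automatically gives $0<Z_0<I$.
\begin{itemize}
    \item If $s(I-Z_0)=\infty$, then $Z=Z_0$ works.
    \item Otherwise the dichotomy gives $s(Z_0)=\infty$. I then choose a central $W\leqslant Z_0$ with $f_\Phi(W)>f_\Phi(Z_0)$ and $f_\Phi(W)\geqslant n$. Then $W\neq Z_0$, so $Z_0-W\neq0$, and also $W\neq 0$. Applying the dichotomy to $Z_0=W+(Z_0-W)$, one of the following holds:
    \begin{itemize}
        \item If $s(Z_0-W)=\infty$, take $Z=W$. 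Then $f_\Phi(W)\geqslant n$, $0<W<I$, and $I-W\geqslant Z_0-W$, so the supremum over central subprojections of $I-Z$ is infinite.
        \item If $s(W)=\infty$, take $Z=Z_0-W$. Then $0<Z<I$, $f_\Phi(Z)\geqslant f_\Phi(Z_0)\geqslant n$ by monotonicity, and $I-Z\geqslant W$ gives the infinite supremum.
    \end{itemize}
\end{itemize}

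The only real obstacle is the min formula, specifically checking that the pieces $EZ_j$ stay in $\mathcal{I}$ with controlled $\Phi$-norm. This is exactly what \Cref{lem 2.1.5} \eqref{item uin3} provides, or alternatively \eqref{item uin1} via $EZ_j=Z_jEZ_j$. The rest is the bookkeeping above, which ensures both $f_\Phi(Z)\geqslant n$ and that $Z$ is a proper nonzero central projection.
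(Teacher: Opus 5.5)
Your proof is correct and follows the paper's argument. You pick a central $Z_0$ with $f_\Phi(Z_0)\geqslant n$ and apply the dichotomy to $Z_0+(I-Z_0)$; if the unbounded part lies under $Z_0$, you split $Z_0$ once more by a proper nonzero central subprojection and take the appropriate piece. Your choice $f_\Phi(W)>f_\Phi(Z_0)$ usefully makes explicit why that proper subprojection exists, which the paper leaves tacit. Your min formula is correct but never actually used: the dichotomy only needs that $f_\Phi$ is decreasing.
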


\begin{proof}
    By \Cref{lem carrier}, there exists a nonzero central projection $Z_0$ in $\mathcal{M}$ such that $f_\Phi(Z_0)\geqslant n$.
    Note that
    \begin{equation*}
        \mathcal{M}=\mathcal{M}Z_0\oplus\mathcal{M}(I-Z_0)\quad\text{and}\quad
        \mathcal{I}=\mathcal{I}Z_0\oplus\mathcal{I}(I-Z_0).
    \end{equation*}
    Let $\Phi_0=\Phi|_{\mathcal{I}Z_0}$ and $\Phi_0'=\Phi|_{\mathcal{I}(I-Z_0)}$ be restrictions of $\Phi$.
    Then $c_{\Phi_0}=\infty$ or $c_{\Phi_0'}=\infty$.
    If $c_{\Phi_0'}=\infty$, then $Z_0\ne I$ and we can take $Z=Z_0$.
    Suppose that $c_{\Phi_0}=\infty$.
    Then there exists a central projection $Z_1$ in $\mathcal{M}$ such that $0<Z_1<Z_0$.
    Similarly, we have
    \begin{equation*}
        \mathcal{M}Z_0=\mathcal{M}Z_1\oplus\mathcal{M}(Z_0-Z_1)\quad\text{and}\quad
        \mathcal{I}Z_0=\mathcal{I}Z_1\oplus\mathcal{I}(Z_0-Z_1).
    \end{equation*}
    Let $\Phi_1=\Phi|_{\mathcal{I}Z_1}$ and $\Phi_1'=\Phi|_{\mathcal{I}(Z_0-Z_1)}$.
    Then $c_{\Phi_1}=\infty$ or $c_{\Phi_1'}=\infty$.
    If $c_{\Phi_1'}=\infty$, then we take $Z=Z_1\leqslant Z_0$.
    If $c_{\Phi_1}=\infty$, then we take $Z=Z_0-Z_1\leqslant Z_0$.
    In both cases, we have $f_\Phi(Z)\geqslant f_\Phi(Z_0)\geqslant n$.
    This completes the proof.
\end{proof}

\begin{lemma}\label{lem norm<Phi}
    Let $\mathcal{M}$ be a von Neumann algebra, $\mathcal{I}$ a weak-operator dense ideal in $\mathcal{M}$, and $\Phi$ a unitarily invariant norm on $\mathcal{I}$.
    % Then for any operator $X$ in $\mathcal{I}$ and nonzero central projection $Z$ in $\mathcal{M}$, we have
    Then the inequality
    \begin{equation*}
        \|XZ\|f_\Phi(Z)\leqslant\Phi(X)
    \end{equation*}
    holds for every operator $X$ in $\mathcal{I}$ and nonzero central projection $Z$ in $\mathcal{M}$.
\end{lemma}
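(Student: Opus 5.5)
We must show that $\|XZ\|f_\Phi(Z)\leqslant\Phi(X)$. The plan is to reduce to the case of a positive operator and then extract a spectral projection of $|X|Z$ that lies in $\mathcal{I}$, has large spectral values, and sits under $Z$.

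First, the reductions. By \Cref{lem 2.1.5} \eqref{item uin1}, $\Phi(XZ)\leqslant\Phi(X)$. By \Cref{lem 2.1.5} \eqref{item uin2}, $\Phi(XZ)=\Phi(|XZ|)$, and $\|XZ\|=\||XZ|\|$. Since $Z$ is central, $|XZ|=|X|Z$. So it suffices to prove the inequality for the positive operator $A=|X|Z\in\mathcal{I}$, which satisfies $A=AZ$. If $A=0$ there is nothing to prove, so assume $\|A\|>0$.

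Next, fix $0<t<\|A\|$ and let $E=\chi_{[t,\|A\|]}(A)$ be the spectral projection of $A$. Then $E\ne 0$, and $E\leqslant Z$ because $A=AZ$ and $Z$ commutes with $A$. Moreover $tE\leqslant AE\leqslant A$, using that $AE=EAE$ commutes with $E$ and $A(I-E)\geqslant 0$.

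To show $E\in\mathcal{I}$, write $E=Af(A)$ with $f(s)=s^{-1}\chi_{[t,\|A\|]}(s)$, a bounded Borel function. Then $E\in\mathcal{I}$ since $\mathcal{I}$ is an ideal; alternatively, \Cref{lem 2.1.5} \eqref{item uin3} applied to $0\leqslant tE\leqslant A$ gives the same conclusion. That lemma also gives $t\Phi(E)\leqslant\Phi(A)$.

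Finally, since $E$ is a nonzero subprojection of $Z$ lying in $\mathcal{I}$, the definition of $f_\Phi$ gives $f_\Phi(Z)\leqslant\Phi(E)$. Hence $t\,f_\Phi(Z)\leqslant\Phi(A)\leqslant\Phi(X)$. Letting $t\to\|A\|=\|XZ\|$ yields the claim.

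The only delicate point is membership and positivity: confirming that the spectral projection lies in $\mathcal{I}$ and lies under $Z$. Both follow from the Borel functional calculus together with the centrality of $Z$.
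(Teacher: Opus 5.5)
Your proof is correct and follows essentially the same route as the paper: reduce to a positive operator, take a nonzero spectral projection $E\leqslant Z$ at a level $t<\|XZ\|$, use \Cref{lem 2.1.5} \eqref{item uin3} to get $E\in\mathcal{I}$ with $t\,\Phi(E)\leqslant\Phi(X)$, and then apply the definition of $f_\Phi(Z)$ and let $t\to\|XZ\|$. The differences are cosmetic: the paper only asserts the existence of $E$ and compares $aE\leqslant EXE$, using $\Phi(EXE)\leqslant\Phi(X)$, whereas you construct the spectral projection explicitly and compare $tE\leqslant|X|Z$ directly.
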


\begin{proof}
    By \Cref{lem 2.1.5} \eqref{item uin2}, we may assume that $X$ is a positive operator in $\mathcal{I}$.
    Without loss of generality, we assume that $XZ\ne 0$.
    Then for any $0<a<\|XZ\|$, there exists a nonzero projection $E$ in $\mathcal{M}Z$ such that $EXE\geqslant aE$.
    It follows from \Cref{lem 2.1.5} that $E\in\mathcal{I}$ and
    \begin{equation*}
        af_\Phi(Z)\leqslant a\Phi(E)=\Phi(aE)\leqslant\Phi(EXE)\leqslant\Phi(X).
    \end{equation*}
    We complete the proof by the arbitrariness of $a$.
\end{proof}

\begin{remark}\label{rem norm<Phi}
    As a consequence, $\|X\|f_\Phi(I)\leqslant\Phi(X)$ for all $X\in\mathcal{I}$.
    Therefore, the unitarily invariant norm $\Phi$ on $\mathcal{I}$ is $\|\cdot\|$-dominating if and only if $f_\Phi(I)\geqslant 1$.
\end{remark}

\subsection{Tools from spectral theory}\label{subsec spectral-theory}

Suppose that $\mathcal{A}$ and $\mathcal{B}$ are unital $C^*$-algebras satisfying that $I_{\mathcal{B}}\in \mathcal{A} \subseteq \mathcal{B}$.
Then $\sigma_{\mathcal{A}}(A) = \sigma_{\mathcal{B}}(A)$ for every operator $A$ in $\mathcal{A}$ by \cite[Proposition VIII.1.14]{Conway-book}.
Moreover, by the GNS construction \cite[Theorem I.9.12]{Davidson-book}, for each abstract $C^*$-algebra $\mathcal{A}$, there is a faithful *-representation $\varphi\colon\mathcal{A}\to\mathcal{B}(\mathcal{H})$ and $\mathcal{A}$ is \emph{isometrically *-isomorphic} to the concrete $C^*$-algebra $\varphi(\mathcal{A})$.
It follows that $\sigma(A)=\sigma(\varphi(A))$ for every operator $A$ in $\mathcal{A}$ and the study of the spectrum of $A$ in $\mathcal{A}$ can always be reduced to that of $\varphi(A)$ in $\mathcal{B(H)}$.
Recall that the {\em approximate point spectrum} of an operator $T$ acting on $\mathcal{H}$ is denoted by $\sigma_{ap}(T)$.

\begin{remark}\label{rem sigma-ap}
    Let $\mathcal{A}$ be a unital $C^*$-algebra, $T$ an operator in $\mathcal{A}$, and $\lambda$ a scalar in the boundary $\partial\sigma(T)$ of $\sigma(T)$.
    Then $(T-\lambda I)^*(T-\lambda I)$ is not invertible in $\mathcal{A}$.
    Indeed, we may assume that $\mathcal{A}$ is a concrete $C^*$-algebra, i.e., it acts on a complex Hilbert space $\mathcal{H}$.
    Since $\partial\sigma(T)\subseteq\sigma_{ap}(T)$, there exists a sequence of unit vectors $\{x_n\}_{n=1}^{\infty}$ in $\mathcal{H}$ such that $\|(T-\lambda I)x_n\|\to 0$ as $n\to\infty$.
    Then $\|(T-\lambda I)^*(T-\lambda I)x_n\|\to 0$.
    This completes the proof.
\end{remark}

In the remainder of this section, we develop some technical tools for operators in upper-triangular form arising from spectral theory, which are also of independent interest.

\begin{lemma}\label{lem UT-spectrum}
    Let $\mathcal{H}_1$ and $\mathcal{H}_2$ be complex Hilbert spaces and $\mathcal{H}=\mathcal{H}_1\oplus\mathcal{H}_2$.
    Suppose that $T$ is an operator in $\mathcal{B}(\mathcal{H})$ of the form
    \begin{equation}\label{equ UT}
        T=
        \begin{pmatrix}
            T_1 & T_{12} \\
            0 & T_2
        \end{pmatrix}
        \begin{matrix}
            \mathcal{H}_1 \\
            \mathcal{H}_2
        \end{matrix}.
    \end{equation}
    Then the spectrum of $T$ satisfies that
    \begin{equation*}
        \sigma_{ap}(T_1)\cup\big(\sigma(T_2)\setminus\sigma(T_1)\big)\subseteq\sigma(T)
        \subseteq\sigma(T_1)\cup\sigma(T_2).
    \end{equation*}
\end{lemma}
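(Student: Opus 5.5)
We prove the two inclusions separately. Everything reduces to elementary facts about invertibility of upper-triangular operator matrices. Replacing $T$ by $T-\lambda I$ (which replaces $T_j$ by $T_j-\lambda I$ and leaves $T_{12}$ unchanged), it suffices to show the following three statements about invertibility at $\lambda=0$.

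\textbf{Step 1 (right inclusion).} If $T_1$ and $T_2$ are both invertible, then $T$ is invertible. We write down the inverse explicitly:
\begin{equation*}
    T^{-1}=
    \begin{pmatrix}
        T_1^{-1} & -T_1^{-1}T_{12}T_2^{-1} \\
        0 & T_2^{-1}
    \end{pmatrix},
\end{equation*}
and check by multiplication that it is a two-sided inverse. This yields $\sigma(T)\subseteq\sigma(T_1)\cup\sigma(T_2)$.

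\textbf{Step 2 ($\sigma_{ap}(T_1)\subseteq\sigma(T)$).} Suppose $0\in\sigma_{ap}(T_1)$, and choose unit vectors $x_n\in\mathcal{H}_1$ with $\|T_1x_n\|\to0$. Then
\begin{equation*}
    T(x_n\oplus 0)=T_1x_n\oplus 0\to0,
\end{equation*}
so $0\in\sigma_{ap}(T)\subseteq\sigma(T)$.

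\textbf{Step 3 ($\sigma(T_2)\setminus\sigma(T_1)\subseteq\sigma(T)$).} This step is the only mildly delicate one. Assume $T_1$ is invertible but $T$ is also invertible; we want to derive that $T_2$ is invertible. Since $T_1$ is invertible, the operator
\begin{equation*}
    S=
    \begin{pmatrix}
        T_1 & T_{12} \\
        0 & I
    \end{pmatrix}
\end{equation*}
is invertible by Step 1. We then have the factorization
\begin{equation*}
    T=S
    \begin{pmatrix}
        I & 0 \\
        0 & T_2
    \end{pmatrix},
\end{equation*}
which we check directly: the product of the two matrices has entries $T_1$, $T_{12}T_2$, $0$, $T_2$. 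This does not match $T$, because the upper right entry is $T_{12}T_2$ rather than $T_{12}$.

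We correct this by using the other order of factorization:
\begin{equation*}
    \begin{pmatrix}
        I & 0 \\
        0 & T_2
    \end{pmatrix}
    \begin{pmatrix}
        T_1 & T_{12} \\
        0 & I
    \end{pmatrix}
    =
    \begin{pmatrix}
        T_1 & T_{12} \\
        0 & T_2
    \end{pmatrix}=T.
\end{equation*}
This gives $\operatorname{diag}(I,T_2)=TS^{-1}$, which is invertible. An operator of the form $\operatorname{diag}(I,T_2)$ is invertible exactly when $T_2$ is invertible, since the inverse must commute with the projection onto $\mathcal{H}_2$ and its compression to $\mathcal{H}_2$ inverts $T_2$. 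Hence $T_2$ is invertible, which gives $\sigma(T_2)\setminus\sigma(T_1)\subseteq\sigma(T)$.

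Combining Steps 1–3 proves the lemma.
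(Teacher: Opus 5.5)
Your proof is correct. Steps 1 and 2 coincide with the paper's argument: the same explicit inverse, and the same approximate eigenvectors $x_n\oplus 0$.

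Step 3 takes a slightly different route. The paper argues at the level of vectors, with $T_1$ invertible:
\begin{itemize}
\item $T$ fails to be surjective whenever $T_2$ does, since the second coordinate of $T(x_1\oplus x_2)$ is $T_2x_2$;
\item if $x_2\in\ker T_2$, then $(-T_1^{-1}T_{12}x_2)\oplus x_2\in\ker T$.
\end{itemize}

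You instead factor $T=\operatorname{diag}(I,T_2)\,S$ with $S$ invertible by Step 1. Invertibility of $T$ then forces invertibility of $\operatorname{diag}(I,T_2)=TS^{-1}$, and hence of $T_2$. Your justification, that the inverse commutes with the projection, is fine. It is even simpler to note that $\operatorname{diag}(I,T_2)$ is injective, resp.\ surjective, exactly when $T_2$ is.

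What each version buys:
\begin{itemize}
\item Your factorization is purely algebraic. It gives the clean statement that, when $T_1$ is invertible, $T$ is invertible if and only if $T_2$ is. It also transfers verbatim to any unital algebra split by a projection, such as the corners of $\mathcal{M}$ used later in the paper.
\item The paper's kernel/range argument is more concrete. As a side effect, it uses invertibility of $T_1$ only for the injectivity half.
\end{itemize}

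In a final write-up, delete the false start with $S\operatorname{diag}(I,T_2)$, which does not equal $T$.
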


\begin{proof}
    If both $T_1$ and $T_2$ are invertible, then $T$ is an invertible operator whose inverse is given by
    \begin{equation*}
        T^{-1}=
        \begin{pmatrix}
            T_1^{-1} & -T_1^{-1}T_{12}T_2^{-1} \\
            0 & T_2^{-1}
        \end{pmatrix}
        \begin{matrix}
            \mathcal{H}_1 \\
            \mathcal{H}_2
        \end{matrix}.
    \end{equation*}
    It follows that $\sigma(T)\subseteq\sigma(T_1)\cup\sigma(T_2)$.

    For every $\lambda\in\sigma_{ap}(T_1)$, there exists a sequence of unit vectors $\{x_n\}_{n=1}^{\infty}$ in $\mathcal{H}_1$ such that $\|(T_1-\lambda I_{\mathcal{H}_1})x_n\|\to 0$ as $n\to\infty$.
    Let $y_n=x_n\oplus 0\in\mathcal{H}$.
    Then $\|(T-\lambda I)y_n\|\to 0$ and hence $\lambda\in\sigma_{ap}(T)$.
    It follows that $\sigma_{ap}(T_1)\subseteq\sigma_{ap}(T)\subseteq\sigma(T)$.

    Suppose that $T_1$ is invertible.
    Clearly, $T$ is not surjective whenever $T_2$ is not surjective.
    Moreover, if $x_2\in\ker T_2$, then $(-T_1^{-1}T_{12}x_2)\oplus x_2\in\ker T$.
    Hence $T$ is not injective whenever $T_2$ is not injective.
    It follows that $\sigma(T_2)\setminus\sigma(T_1)\subseteq\sigma(T)$.
    % This completes the proof.
\end{proof}

By the equality $\sigma(T_1)\cup\sigma(T_2)=\sigma(T_1)\cup\big(\sigma(T_2)\setminus\sigma(T_1)\big)$, the following corollary is a direct consequence of \Cref{lem UT-spectrum}.

\begin{corollary}\label{cor UT-spectrum}
    Let $T$ be the operator given by \eqref{equ UT}.
    If $\sigma(T_1)=\sigma_{ap}(T_1)$, then $\sigma(T)=\sigma(T_1)\cup\sigma(T_2)$.
    In particular, if $T_1$ is normal, then $\sigma(T)=\sigma(T_1)\cup\sigma(T_2)$.
\end{corollary}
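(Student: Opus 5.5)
The statement follows directly from \Cref{lem UT-spectrum}, with one set identity. The lemma gives
\begin{equation*}
    \sigma_{ap}(T_1)\cup\big(\sigma(T_2)\setminus\sigma(T_1)\big)\subseteq\sigma(T)\subseteq\sigma(T_1)\cup\sigma(T_2).
\end{equation*}
Assume $\sigma(T_1)=\sigma_{ap}(T_1)$. Then the left-hand side is $\sigma(T_1)\cup\big(\sigma(T_2)\setminus\sigma(T_1)\big)$. This set equals $\sigma(T_1)\cup\sigma(T_2)$, because any point of $\sigma(T_2)$ either lies in $\sigma(T_1)$ or lies in $\sigma(T_2)\setminus\sigma(T_1)$. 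So the two outer sets coincide, and $\sigma(T)$ is squeezed between them. Hence $\sigma(T)=\sigma(T_1)\cup\sigma(T_2)$.

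For the \emph{in particular} part, it remains to check that a normal operator $T_1$ on $\mathcal{H}_1$ satisfies $\sigma(T_1)=\sigma_{ap}(T_1)$. The inclusion $\sigma_{ap}(T_1)\subseteq\sigma(T_1)$ is automatic. For the reverse, take $\lambda\in\sigma(T_1)$ and set $N=T_1-\lambda I_{\mathcal{H}_1}$. Then $N$ is normal, so $\|Nx\|=\|N^*x\|$ for every $x\in\mathcal{H}_1$.

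We argue by contradiction. Suppose $N$ is bounded below, say $\|Nx\|\geqslant c\|x\|$ with $c>0$. Then $N$ is injective with closed range. By normality, $\ker N^*=\ker N=\{0\}$, so the range of $N$ is dense. Being closed and dense, the range is all of $\mathcal{H}_1$, so $N$ is invertible. This contradicts $\lambda\in\sigma(T_1)$. Therefore $N$ is not bounded below, i.e.\ $\lambda\in\sigma_{ap}(T_1)$.

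Alternatively, one can use that the spectrum of a normal operator lies in its approximate point spectrum via the spectral theorem: the spectral projection of $T_1$ onto a small disc around $\lambda$ is nonzero, and any unit vector in its range is an approximate eigenvector.

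There is no real obstacle here. The only substantive point is the fact $\sigma=\sigma_{ap}$ for normal operators, and the argument above is the standard proof of it.
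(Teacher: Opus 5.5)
Your proof is correct and matches the paper's: the paper also obtains the corollary directly from \Cref{lem UT-spectrum} via the identity $\sigma(T_1)\cup\sigma(T_2)=\sigma(T_1)\cup\big(\sigma(T_2)\setminus\sigma(T_1)\big)$. The paper takes $\sigma(T_1)=\sigma_{ap}(T_1)$ for normal $T_1$ as standard, and your verification of that fact is also correct.
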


The following lemma is a certain converse of \Cref{cor UT-spectrum}.

\begin{lemma}\label{lem UT-converse}
    Let $T_1$ be an operator on a Hilbert space $\mathcal{H}_1$.
    If $\sigma(T_1)\ne\sigma_{ap}(T_1)$, then there exists an operator $T$ of the form \eqref{equ UT} such that $\sigma(T)\ne\sigma(T_1)\cup\sigma(T_2)$.
\end{lemma}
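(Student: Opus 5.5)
Since $\sigma(T_1)\ne\sigma_{ap}(T_1)$, fix $\lambda\in\sigma(T_1)\setminus\sigma_{ap}(T_1)$. Then $A:=T_1-\lambda I_{\mathcal{H}_1}$ is bounded below but not invertible, so $A$ is injective with closed range $\operatorname{ran}A\ne\mathcal{H}_1$. The plan is to choose $T_2$ and $T_{12}$ so that $T-\lambda I$ is invertible. Since $\lambda\in\sigma(T_1)$, this shows $\lambda\notin\sigma(T)$ while $\lambda\in\sigma(T_1)\cup\sigma(T_2)$, which is the required failure of equality.

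Take $\mathcal{H}_2=\mathcal{K}:=(\operatorname{ran}A)^\perp\ne\{0\}$ and set $T_2=\lambda I_{\mathcal{H}_2}$. Let $T_{12}\colon\mathcal{H}_2\to\mathcal{H}_1$ be the inclusion $J$ of $\mathcal{K}$ into $\mathcal{H}_1$. Then
\begin{equation*}
    T-\lambda I=
    \begin{pmatrix}
        A & J \\
        0 & 0
    \end{pmatrix},
\end{equation*}
which is not invertible, since its range lies in $\mathcal{H}_1\oplus 0$. So this naive choice fails, and $T_2-\lambda$ itself must be invertible while still $\lambda\in\sigma(T_2)$ is impossible. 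Instead we need $\lambda\in\sigma(T_1)$ to be cancelled, keeping $\sigma(T_1)\cup\sigma(T_2)\ni\lambda$ automatically. This requires $T-\lambda I$ to be bijective even though $A$ is not onto.

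The fix is to let $T_2-\lambda$ be a non-invertible operator whose kernel compensates the cokernel of $A$. Take $\mathcal{H}_2=\mathcal{K}\oplus\mathcal{K}\oplus\cdots$ and let $T_2-\lambda=S^*$, the backward shift, which is surjective with kernel the first copy $\mathcal{K}_1\cong\mathcal{K}$. Let $T_{12}=J P_{\mathcal{K}_1}$, identifying $\mathcal{K}_1$ with $\mathcal{K}\subseteq\mathcal{H}_1$.

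\emph{Injectivity.} Suppose $(T-\lambda I)(x\oplus y)=0$. Then $S^*y=0$, so $y\in\mathcal{K}_1$. The first row gives $Ax+Jy=0$. Since $\operatorname{ran}A\perp\mathcal{K}$, both $Ax=0$ and $Jy=0$, hence $x=0$ and $y=0$.

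\emph{Surjectivity.} Given $u\oplus v$, choose $y_0$ with $S^*y_0=v$. We may then modify $y_0$ by any element $k\in\mathcal{K}_1$. Decompose $u-Jy_0=a+k'$ with $a\in\operatorname{ran}A$ and $k'\in\mathcal{K}$. Adding $k'$ to the $\mathcal{K}_1$-component of $y_0$ and solving $Ax=a$ produces a preimage.

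Thus $T-\lambda I$ is a bounded bijection, hence invertible, so $\lambda\notin\sigma(T)$. This gives $\sigma(T)\ne\sigma(T_1)\cup\sigma(T_2)$.

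The main obstacle is exactly this construction: realizing that $T_2$ must have a kernel that exactly fills the cokernel of $T_1-\lambda$ through $T_{12}$, with surjectivity of $T_2-\lambda$ supplied by a backward shift of multiplicity $\dim\mathcal{K}$. The remaining verifications are routine.
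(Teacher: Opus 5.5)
Your construction is correct and is essentially the paper's: the column $\bigl(T_{12},\,T_2-\lambda\bigr)$ you build sends $(k_1,k_2,\dots)\mapsto k_1\oplus(k_2,k_3,\dots)$, so it is an isometry of $\mathcal{H}_2$ onto $\mathcal{H}\ominus\operatorname{ran}(T_1-\lambda)$. This is exactly what the paper obtains abstractly from a partial isometry $V$ with $\mathcal{H}_2=\bigoplus_{\mathbb{N}}\mathcal{H}_1$; your explicit shift over $\mathcal{K}$ just avoids the dimension count needed for $V$ to exist (one small slip: in the surjectivity step, $u-Jy_0$ should read $u-T_{12}y_0=u-JP_{\mathcal{K}_1}y_0$).
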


\begin{proof}
    Without loss of generality, we assume that $0\in\sigma(T_1)\setminus\sigma_{ap}(T_1)$.
    Then $T_1$ is injective and has closed range in $\mathcal{H}_1$.
    Let $\mathcal{H}_2$ be the direct sum of countably infinitely many copies of $\mathcal{H}_1$ and $\mathcal{H}=\mathcal{H}_1\oplus\mathcal{H}_2$.
    Then there exists a partial isometry $V$ in $\mathcal{B}(\mathcal{H})$ with initial space $\mathcal{H}_2$ and final space $\mathcal{H}\ominus\mathrm{ran}(T_1) $.
    Let
    \begin{equation*}
        T_{12}=I_{\mathcal{H}_1}VI_{\mathcal{H}_2}\in\mathcal{B}(\mathcal{H}_2,\mathcal{H}_1)
        \quad\text{and}\quad
        T_2=I_{\mathcal{H}_2}VI_{\mathcal{H}_2}\in\mathcal{B}(\mathcal{H}_2).
    \end{equation*}
    Then the operator $T$ given by \eqref{equ UT} is invertible in $\mathcal{B}(\mathcal{H})$.
    In other words, $0\notin\sigma(T)$.
    It follows that $\sigma(T)$ is a proper subset of $\sigma(T_1)\cup\sigma(T_2)$.
\end{proof}

\begin{remark}\label{rem UT-converse}
    Let $T_1$ be an operator on a Hilbert space $\mathcal{H}_1$.
    Then there exists an operator $T$ of the form \eqref{equ UT} such that $\sigma(T)=\sigma_{ap}(T_1)$ by \cite{Read87}.
    This result is stronger than \Cref{lem UT-converse}, and its proof relies on a substantially more involved construction than that of \Cref{lem UT-converse}.
    % which is stronger than \Cref{lem UT-converse} and its proof employs deeper construction than that of \Cref{lem UT-converse}.
    In addition, if $T$ is of the form \eqref{equ UT}, then we can write
    \begin{equation*}
        T^*=
        \begin{pmatrix}
            T_2^* & T_{12}^* \\
            0 & T_1^*
        \end{pmatrix}
        \begin{matrix}
            \mathcal{H}_2 \\
            \mathcal{H}_1
        \end{matrix}.
    \end{equation*}
    It follows from \Cref{cor UT-spectrum} that $\sigma(T)=\sigma(T_1)\cup\sigma(T_2)$ whenever $\sigma(T_2^*)=\sigma_{ap}(T_2^*)$.
\end{remark}

We present an example to illustrate \Cref{lem UT-converse}.

\begin{example}\label{eg shift}
    Suppose that $\mathcal{H}$ is a complex Hilbert space with an orthonormal basis $\{e_n\}_{n\in\mathbb{Z}}$ and $T$ is the bilateral shift operator on $\mathcal{H}$ satisfying that $Te_n = e_{n+1}$ for every integer $n\in\mathbb{Z}$.
    Let $\mathcal{H}_1$ and $\mathcal{H}_2$ be the closed linear spans of $\{e_n\}_{n\geqslant 0} $ and $\{e_n\}_{n \leqslant -1}$, respectively.
    Since $\mathcal{H}_1$ is an invariant subspace of $T$, we can write
    \begin{equation*}
        T=
        \begin{pmatrix}
            T_1 & T_{12} \\
            0 & T_2
        \end{pmatrix}
        \begin{matrix}
            \mathcal{H}_1 \\
            \mathcal{H}_2
        \end{matrix}
    \end{equation*}
    as in \eqref{equ UT}, where $T_{12}$ is a rank-one partial isometry, and both $T_1$ and $T_2^*$ are unitarily equivalent to the unilateral shift operator.
    Therefore, we have
    \begin{equation*}
        \sigma(T)=\sigma_{ap}(T_1)=\sigma_{ap}(T_2^*)=\mathbb{T} \quad\text{and}\quad \sigma(T_1)=\sigma(T_2)=\overline{\mathbb{D}},
    \end{equation*}
    where $\mathbb{T}$ is the unit circle and $\overline{\mathbb{D}}$ is the closed unit disk in the complex plane $\mathbb{C}$.
    It follows that $\sigma(T)$ is a proper subset of $\sigma(T_1)\cup\sigma(T_2)$.
\end{example}

At the end of this section, we present a standard tool in spectral theory.
For the reader's convenience, we sketch a brief proof.

\begin{lemma}\label{lem sp-disconnected}
    Let $\mathcal{A}$ be a unital Banach algebra.
    Then the set of elements in $\mathcal{A}$ with disconnected spectrum is open in the norm topology.
\end{lemma}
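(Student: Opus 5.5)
The plan is to use upper semicontinuity of the spectrum together with the holomorphic functional calculus, more precisely the continuity of Riesz idempotents. Let $T\in\mathcal{A}$ have disconnected spectrum. Write $\sigma(T)=K_1\cup K_2$ with $K_1,K_2$ nonempty, disjoint and compact. Choose disjoint open sets $U_1\supseteq K_1$ and $U_2\supseteq K_2$ whose boundaries are finite unions of smooth closed curves. Set $U=U_1\cup U_2$.

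\textbf{Step 1: the spectrum stays inside $U$.} The resolvent $\lambda\mapsto(\lambda I-T)^{-1}$ is continuous on the compact set $\mathbb{C}\setminus U$ intersected with a large disc. Hence $M=\sup_{\lambda\in\partial U}\|(\lambda I-T)^{-1}\|$ is finite, and more generally the resolvent norm is bounded on that compact set. For large $|\lambda|$ we have $|\lambda|>\|T\|+1$, and there the resolvent bound is automatic. By the Neumann series, if $\|S-T\|<\delta$ with $\delta$ small, then $\lambda I-S$ is invertible for every $\lambda\notin U$. Therefore $\sigma(S)\subseteq U$.

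\textbf{Step 2: both pieces $U_1$ and $U_2$ meet $\sigma(S)$.} For $j=1,2$ define the Riesz idempotent
\[
P_j(S)=\frac{1}{2\pi i}\int_{\partial U_j}(\lambda I-S)^{-1}\,d\lambda .
\]
This is a norm-continuous function of $S$ in a neighborhood of $T$, because the integrand depends continuously on $S$, uniformly on the compact contour. For $S=T$ we have $P_j(T)\neq 0$. Indeed, if $P_j(T)=0$, then the functional calculus decomposition would give $\sigma(T)\cap U_j=\emptyset$, contradicting $K_j\neq\emptyset$.

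By continuity, $P_j(S)\neq 0$ for $S$ close to $T$. If instead $\sigma(S)\cap U_j=\emptyset$, then the integrand would be holomorphic on a neighborhood of $\overline{U_j}$. Cauchy's theorem would then force $P_j(S)=0$, a contradiction. Hence $\sigma(S)\cap U_j\neq\emptyset$ for $j=1,2$.

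\textbf{Conclusion.} For such $S$, $\sigma(S)$ is contained in $U_1\cup U_2$ and meets both of the disjoint open sets $U_1$ and $U_2$. Therefore $\sigma(S)$ is disconnected, and the set of elements with disconnected spectrum is open.

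The main technical point is Step 2, namely the nonvanishing of the Riesz projection on a nonempty spectral part. I would cite the standard spectral mapping fact $\sigma(T|_{P_j})=K_j$ from \cite[Proposition 4.11]{Conway-book} rather than reprove it.
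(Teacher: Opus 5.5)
Your proof is correct and is essentially the paper's argument: upper semicontinuity of the spectrum keeps $\sigma(S)$ inside $U_1\cup U_2$, and norm-continuity of the Riesz idempotents (the paper's $\chi_j(T)$) keeps both of them nonzero, which forces $\sigma(S)$ to meet each $U_j$. The differences are minor: you argue directly rather than by contradiction along a sequence, you prove the semicontinuity via the Neumann series instead of citing it, and you make explicit the Cauchy-theorem step that the paper leaves tacit. In an abstract Banach algebra, $P_j(T)\neq 0$ is most cleanly seen from the spectral mapping theorem, since $1\in\chi_j(\sigma(T))=\sigma(P_j(T))$; this avoids the notion of restriction.
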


\begin{proof}
    Let $T$ be an element in $\mathcal{A}$ with disconnected spectrum.
    Then there are disjoint bounded open subset $\Omega_1$ and $\Omega_2$ of $\mathbb{C}$ such that
    \begin{equation*}
        \sigma(T)\cap\Omega_1\ne\varnothing, \quad\sigma(T)\cap\Omega_2\ne\varnothing, \quad\text{and}\quad
        \sigma(T)\subseteq\Omega_1\cup\Omega_2.
    \end{equation*}
    Let $\chi_j$ be the characteristic function of $\Omega_j$ for $j=1,2$.
    Then $\chi_1(T)$ and $\chi_2(T)$ are nonzero idempotents in $\mathcal{A}$ with $\chi_1(T)+\chi_2(T)=I$.

    Arguing by contradiction, if $T$ is not an interior point of the set of elements with disconnected spectrum, then there exists a sequence $\{T_n\}_{n=1}^{\infty}$ with connected spectrum converging to $T$.
    By the upper semi-continuity of the spectrum \cite[Theorem 10.20]{Rudin-book-FA}, there exists $n_1\geqslant 1$ such that $\sigma(T_n)\subseteq\Omega_1\cup\Omega_2$ for each $n\geqslant n_1$.
    By the continuity of the analytic function calculus, there exists $n_2\geqslant n_1$ such that
    \begin{equation*}
        \|\chi_j(T_n)-\chi_j(T)\|<\|\chi_j(T)\|
    \end{equation*}
    for each $j=1,2$ and each $n\geqslant n_2$.
    It follows that $\chi_j(T_n)\ne 0$ for $j=1,2$.
    Thus, the spectrum of $T_n$ is disconnected for all $n\geqslant n_2$, a contradiction.
    This completes the proof.
\end{proof}

\section{Main results} \label{sec main}
In this section, we first present the proofs of \Cref{prop main} and \Cref{prop main-converse}.
Our main result, \Cref{thm main}, follows directly from these two propositions. Then  we prove \Cref{thm main-RR0} with \Cref{lem small-TE-RR0}. Furthermore, in \Cref{prop subset-R}, a necessary and sufficient condition  is given for a compact subset $X$ of $\mathbb{R}$ such that elements of $C(X)$ with disconnected spectrum  form a dense subset, in connection with \Cref{problem 1}.

\subsection{Perturbations in von Neumann algebras}\label{subsec perturbation-VNA}
The following technical lemma is prepared for \Cref{prop main}.

\begin{lemma}\label{lem small-TE}
    Let $\mathcal{M}$ be a von Neumann algebra, $\mathcal{I}$ a weak-operator dense ideal in $\mathcal{M}$, and $\Phi$ a unitarily invariant $\|\cdot\|$-dominating norm on $\mathcal{I}$ with $c_\Phi<\infty$.
    Suppose that $T$ is an operator in $\mathcal{M}$ and $\lambda$ is a boundary point of $\sigma(T)$, i.e., $\lambda\in\partial\sigma(T)$.
    Then for any $\varepsilon>0$, there exists a nonzero projection $E$ in $\mathcal{I}$ such that
    \begin{equation*}
        \Phi(E)<1+c_\Phi\quad\text{and}\quad \Phi((T-\lambda I)E)<\varepsilon.
    \end{equation*}
\end{lemma}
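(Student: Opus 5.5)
The plan is to turn the boundary-point information into a spectral projection of $|T-\lambda I|^2$ near $0$. Then I pick a subprojection $E$ in $\mathcal{I}$ of controlled $\Phi$-norm. Finally I bound $\Phi((T-\lambda I)E)$ by a small multiple of $\Phi(E)$, using the order-monotonicity in \Cref{lem 2.1.5}.

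Put $S=T-\lambda I$. Since $\lambda\in\partial\sigma(T)$, \Cref{rem sigma-ap} shows that the positive operator $S^*S$ is not invertible in $\mathcal{M}$, so $0\in\sigma(S^*S)$. Fix $\delta>0$, to be chosen later, and let $P=\chi_{[0,\delta]}(S^*S)$ be the spectral projection of $S^*S$, which lies in $\mathcal{M}$. Because $0\in\sigma(S^*S)$, the spectral projection of the relatively open set $[0,\delta]\cap\sigma(S^*S)$ is nonzero, so $P\neq 0$. Moreover $S^*S$ commutes with $P$ and $PS^*SP\leqslant\delta P$. By \Cref{rem f-Phi}, applied with the tolerance $1$ and using $c_\Phi<\infty$, there is a nonzero projection $E\in\mathcal{I}$ with $E\leqslant P$ and $\Phi(E)<c_\Phi+1$. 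This already gives the first required inequality.

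For the second inequality, write $E=EP=PE$ and compute
\begin{equation*}
|SE|^2=ES^*SE=E(PS^*SP)E\leqslant\delta E .
\end{equation*}
Since $t\mapsto t^{1/2}$ is operator monotone, this gives $0\leqslant|SE|\leqslant\sqrt{\delta}\,E$. Since $\sqrt{\delta}E\in\mathcal{I}$, \Cref{lem 2.1.5}\,\eqref{item uin3} yields $|SE|\in\mathcal{I}$ and $\Phi(|SE|)\leqslant\sqrt{\delta}\,\Phi(E)$. Note that $SE\in\mathcal{I}$ anyway, because $\mathcal{I}$ is an ideal and $E\in\mathcal{I}$. By \Cref{lem 2.1.5}\,\eqref{item uin2},
\begin{equation*}
\Phi(SE)=\Phi(|SE|)\leqslant\sqrt{\delta}\,\Phi(E)<\sqrt{\delta}\,(1+c_\Phi).
\end{equation*}
Choosing $\delta$ with $\sqrt{\delta}(1+c_\Phi)<\varepsilon$ completes the argument. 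This choice is legitimate because $\delta$ can be fixed before $P$ and $E$ are selected, and the bound $\Phi(E)<1+c_\Phi$ does not depend on $\delta$.

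The only real obstacle is passing from smallness in operator norm, namely $\|SP\|\leqslant\sqrt{\delta}$, to smallness in $\Phi$. A naive estimate $\Phi(SE)\leqslant\|S\|\Phi(E)$ via \Cref{lem 2.1.5}\,\eqref{item uin1} loses the factor $\sqrt\delta$. The fix is the operator inequality $|SE|\leqslant\sqrt{\delta}E$ combined with monotonicity of $\Phi$. The uniform bound $c_\Phi<\infty$ is exactly what keeps $\Phi(E)$ bounded regardless of how small $P$ becomes. The $\|\cdot\|$-dominating hypothesis is not needed in this argument.
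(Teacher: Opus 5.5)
Your proof is correct and is essentially the paper's argument: the same nonzero spectral projection $P$ of $S^*S$ (with $S=T-\lambda I$) for $[0,\delta]$, the same choice via \Cref{rem f-Phi} of a nonzero $E\leqslant P$ in $\mathcal{I}$ with $\Phi(E)<1+c_\Phi$, and the same final bound $\Phi(SE)\leqslant\sqrt{\delta}\,\Phi(E)$. There is one small slip: $[0,\delta]\cap\sigma(S^*S)$ need not be relatively open in $\sigma(S^*S)$, but $P\neq 0$ anyway, because $P=0$ would give $S^*S\geqslant\delta I$.

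The only real difference is the last step. The paper writes $SE=(SE)E$ and applies \Cref{lem 2.1.5}\,\eqref{item uin1} directly, obtaining $\Phi(SE)\leqslant\|SE\|\,\Phi(E)\leqslant\|SP\|\,\Phi(E)\leqslant\sqrt{\delta}\,\Phi(E)$ in one line. So the ``obstacle'' you describe is not a real one. Your detour through operator monotonicity of $t^{1/2}$ and \Cref{lem 2.1.5}\,\eqref{item uin2}, \eqref{item uin3} is valid but unnecessary.
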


\begin{proof}
    Without loss of generality, we may assume that $\lambda=0\in\partial\sigma(T)$.
    Then $T^*T$ is not invertible by \Cref{rem sigma-ap}.
    Let $P$ be the spectral projection of $T^*T$ with respect to the closed interval $[0,\frac{\varepsilon^2}{(1+c_\Phi)^2}]$.
    It is clear that $P$ is a nonzero projection in $\mathcal{M}$ with $\|TP\|\leqslant\frac{\varepsilon}{1+c_\Phi}$.
    By \Cref{lem E<P-RR0}, there exists a nonzero projection $E_0$ in $\mathcal{I}$ such that $E_0\leqslant P$.
    By \Cref{rem f-Phi}, there exists a nonzero projection $E\leqslant E_0$ such that $\Phi(E)<1+c_\Phi$.
    Since $E$ is a subprojection of $P$, we have $\|TE\|\leqslant\|TP\|\leqslant\frac{\varepsilon}{1+c_\Phi}$.
    Combining with \Cref{lem 2.1.5} \eqref{item uin1}, we have
    \begin{equation*}
      \Phi(TE)\leqslant\|TE\|\Phi(E)<\varepsilon.
    \end{equation*}
    This completes the proof.
\end{proof}

\begin{remark}\label{rem small-TE}
    Every nonzero subprojection of $E$ also satisfies the conclusion in \Cref{lem small-TE}.
    In particular, if the von Neumann algebra $\mathcal{M}$ is atomic, i.e., a direct sum of type I factors, then $E$ can be chosen to be a minimal projection.
    If $\mathcal{M}$ is a type $\mathrm{II}_\infty$ factor with a normal faithful semifinite tracial weight $\tau$, then we can require that $\tau(E)<\varepsilon$.
\end{remark}

The following proposition is the main result in this section.

\begin{proposition}\label{prop main}
    Let $\mathcal{M}$ be a von Neumann algebra with $\dim\mathcal{M}>1$, $\mathcal{I}$ a weak-operator dense ideal in $\mathcal{M}$, and $\Phi$ a unitarily invariant $\|\cdot\|$-dominating norm on $\mathcal{I}$ with $c_\Phi<\infty$.
    Then for any $T\in\mathcal{M}$ and $\varepsilon>0$, there exists an operator $X$ in $\mathcal{I}$ with $\Phi(X)<\varepsilon$ such that the spectrum of $T+X$ is disconnected.
\end{proposition}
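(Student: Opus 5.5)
The plan is to use \Cref{lem small-TE} to split off a small corner on which $T$ is almost the scalar $\lambda$, and then push the spectrum of that corner away from the rest. Fix $T\in\mathcal{M}$ and $\varepsilon>0$. If $\sigma(T)$ is already disconnected, take $X=0$. Otherwise pick $\lambda\in\partial\sigma(T)$ and assume $\lambda=0$ by translation. For a small $\eta>0$, chosen below, \Cref{lem small-TE} gives a nonzero projection $E\in\mathcal{I}$ with $\Phi(E)<1+c_\Phi$ and $\Phi(TE)<\eta$. By \Cref{lem 2.1.5} \eqref{item uin2}, also $\Phi(ET^*)<\eta$, and hence $\Phi(ET)=\Phi((T^*E)^*)$. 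To control $ET$ as well, I would apply \Cref{lem small-TE} to $T^*$ instead, or better to $|T|^2+|T^*|^2$ type operators; the cleanest choice is to work with $T$ relative to $E$ in block form $I=E+(I-E)$.

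First perturbation: set $X_1=-TE$. Then $T+X_1=T(I-E)$ has block form
\begin{equation*}
\begin{pmatrix} 0 & ET(I-E)\\ 0 & (I-E)T(I-E)\end{pmatrix},
\end{equation*}
which is upper triangular with a zero $(1,1)$ corner. Next add $X_2=\mu E$, where the scalar $\mu$ is to be chosen, with $\Phi(X_2)=|\mu|\Phi(E)<|\mu|(1+c_\Phi)$. The resulting operator $S=T-TE+\mu E$ is upper triangular with $T_1=\mu E$ on $E\mathcal{H}$, which is normal, and $T_2=(I-E)T(I-E)$. By \Cref{cor UT-spectrum}, $\sigma(S)=\{\mu\}\cup\sigma(T_2)$, where $\sigma(T_2)$ is computed in $(I-E)\mathcal{M}(I-E)$. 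If $E=I$, treat that case separately; since $\dim\mathcal{M}>1$, a proper subprojection can be taken by \Cref{rem small-TE}, or via \Cref{lem E<P-RR0} if $E$ is not minimal. So $\sigma(S)$ is disconnected as soon as $\mu\notin\sigma(T_2)$ and $\mu$ is isolated, that is, $\mu\notin\sigma(T_2)$, since $\sigma(T_2)$ is compact and nonempty.

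The main obstacle is choosing $\mu$ of size about $\varepsilon/(2(1+c_\Phi))$ outside $\sigma(T_2)$, because $\sigma(T_2)$ is not directly controlled by $\sigma(T)$. My plan is as follows. Since $0\in\partial\sigma(T)$, there is a point $\mu$ with $|\mu|<\varepsilon/(2(1+c_\Phi))$ and $\operatorname{dist}(\mu,\sigma(T))=r>0$. The operator $T-TE$ differs from $T$ by $\Phi(TE)<\eta$, so its norm distance is at most $\eta$. By upper semicontinuity of the spectrum, in its resolvent-norm form (if $\|(T-\mu)^{-1}\|\eta<1$ then $T-TE-\mu$ is invertible), choosing $\eta<1/\|(T-\mu)^{-1}\|$ gives $\mu\notin\sigma(T-TE)$. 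Also $\sigma(T-TE)=\{0\}\cup\sigma(T_2)$ by \Cref{cor UT-spectrum}, so $\mu\notin\sigma(T_2)$. Thus the order of choices is $\mu$ first, then $\eta$, then $E$.

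Finally, set $X=-TE+\mu E\in\mathcal{I}$. Then $\Phi(X)<\eta+|\mu|(1+c_\Phi)<\varepsilon$, and $\sigma(T+X)=\{\mu\}\cup\sigma(T_2)$ with $\mu\notin\sigma(T_2)\neq\varnothing$, which is disconnected. The nonemptiness requires $E\neq I$. If every admissible $E$ equals $I$, then $I\in\mathcal{I}$ is a minimal projection, so $\mathcal{M}=\mathbb{C}$, contradicting $\dim\mathcal{M}>1$. Otherwise replace $E$ by a nonzero proper subprojection, which exists when $E$ is not minimal. If $E$ is minimal but $E\ne I$, no replacement is needed.
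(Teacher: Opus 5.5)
Your proposal is correct and is essentially the paper's argument: take $X=(\mu I-T)E$ with $E\ne I$ from \Cref{lem small-TE}, apply \Cref{cor UT-spectrum} to $T(I-E)$ to keep $\mu$ out of $\sigma\big((I-E)T(I-E)\big)$, and apply it again to $\mu E+T(I-E)$. The only difference is how $\mu$ is chosen: the paper takes $\lambda$ of maximal real part and $\mu=\varepsilon_0$, excluding $\mu$ by upper semicontinuity on the half-plane $\{\operatorname{Re}z<\varepsilon_0\}$, whereas you take an arbitrary boundary point and a nearby resolvent point $\mu$, excluding it by a Neumann series. The bound $\eta\leqslant\varepsilon/2$ that you need at the end is automatic, since $\eta<\|(T-\mu I)^{-1}\|^{-1}\leqslant\operatorname{dist}(\mu,\sigma(T))\leqslant|\mu|$, and the digression about controlling $ET$ can be dropped.
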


\begin{proof}
    Let $\varepsilon_0=\frac{\varepsilon}{2(1+c_\Phi)}>0$.
    Suppose that $\lambda$ is a scalar in $\sigma(T)$ such that
    \begin{equation*}
        \operatorname{Re}\lambda=\max\{\operatorname{Re}z\colon z\in\sigma(T)\}.
    \end{equation*}
    Then $\lambda\in\partial\sigma(T)$.
    Without loss of generality, we may assume that $\lambda=0$.
    By the upper semi-continuity of the spectrum \cite[Theorem 10.20]{Rudin-book-FA}, there exists $0<\delta<\varepsilon_0$ such that for every operator $A$ with $\|A-T \|<\delta$ and every $z\in\sigma(A)$, we have $\operatorname{Re}z<\varepsilon_0$.
    By \Cref{lem small-TE}, there exists a nonzero projection $E$ in $\mathcal{I}$ such that
    \begin{equation}\label{equ main}
        \Phi(E)<1+c_\Phi\quad\text{and}\quad \Phi(TE)<\delta.
    \end{equation}
    Since $\dim\mathcal{M}>1$, we may further assume that $E\ne I$.
    This yields that $\operatorname{Re}z<\varepsilon_0$ for every $z\in\sigma(T(I-E))$ as $\|T(I-E)-T\|=\|TE\|\leqslant\Phi(TE)<\delta$.
    It follows from \Cref{cor UT-spectrum} that
    \begin{equation*}
        \sigma(T(I-E))=\{0\}\cup\sigma_{(I-E)\mathcal{M}(I-E)}((I-E)T(I-E)).
    \end{equation*}
    As a consequence, $\operatorname{Re}z<\varepsilon_0$ for every $z\in\sigma_{(I-E)\mathcal{M}(I-E)}((I-E)T(I-E))$.
    By applying \Cref{cor UT-spectrum} again, the set
    \begin{equation*}
        \sigma(\varepsilon_0E+T(I-E))
        =\{\varepsilon_0\}\cup\sigma_{(I-E)\mathcal{M}(I-E)}((I-E)T(I-E))
    \end{equation*}
    is disconnected.
    Let $X=(\varepsilon_0I-T)E$.
    Then, by \eqref{equ main} and the definition of $\varepsilon_0$, we obtain that
    \begin{equation*}
        \Phi(X)\leqslant\varepsilon_0\Phi(E)+\Phi(TE)<\varepsilon.
    \end{equation*}
    This completes the proof.
\end{proof}

\begin{remark}\label{rem main}
    By \Cref{rem small-TE} and the proof of \Cref{prop main}, if $\mathcal{M}$ is atomic, then $X$ can be taken as a rank-one operator.
    Therefore, we get a new proof of the main theorem in \cite{HS72} for Hilbert spaces.
    Moreover, if $\mathcal{M}$ is a type $\mathrm{II}_\infty$ factor with a normal faithful semifinite tracial weight $\tau$, then we can require that $\tau(R(X))<\varepsilon$.
\end{remark}

% \begin{corollary}\label{cor main}
%     Let $\mathcal{M}$ be a von Neumann algebra with $\dim\mathcal{M}>1$.
%     Then the set of operators in $\mathcal{M}$ with disconnected spectrum is dense in $\mathcal{M}$ with respect to the operator norm topology.
% \end{corollary}

The following proposition is a converse of \Cref{prop main}.

\begin{proposition}\label{prop main-converse}
    Let $\mathcal{M}$ be a von Neumann algebra, $\mathcal{I}$ a weak-operator dense ideal in $\mathcal{M}$, and $\Phi$ a unitarily invariant $\|\cdot\|$-dominating norm on $\mathcal{I}$ with $c_\Phi =\infty$.
    Then there exists an operator $T$ in $\mathcal{M}$ such that $\sigma(T+X)$ is connected for every $X$ in $\mathcal{I}$ with $\Phi(X)<1$.
\end{proposition}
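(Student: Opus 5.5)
The plan is to build $T$ as a diagonal sum of scalars over central pieces whose $f_\Phi$-values tend to infinity. Since a perturbation with $\Phi(X)<1$ is then tiny in norm on the pieces deep in the sequence, those pieces will still fill out a connected set in $\sigma(T+X)$.

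\emph{Choosing the pieces.} Apply \Cref{lem c=infinite} repeatedly, each time inside the remaining corner $\mathcal{M}(I-Z_1-\cdots-Z_{n-1})$, whose restricted norm still has infinite constant. This produces mutually orthogonal nonzero central projections $Z_1,Z_2,\dots$ in $\mathcal{M}$ with $f_\Phi(Z_n)\geqslant n$. The definition of $f_\Phi$ only involves subprojections, so the bound $f_\Phi(Z_n)\geqslant n$ computed in the corner is the same as in $\mathcal{M}$. Let $\{q_n\}$ be an enumeration of the rational points of $[0,1]$, each point appearing infinitely often, put $Z_\infty=I-\sum_n Z_n$, and define
\begin{equation*}
T=\sum_{n}q_nZ_n \quad(\text{strong sum}),
\end{equation*}
taking $T$ to be $0$ on $Z_\infty$. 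Then $T$ is normal and central, with $\sigma(T)=[0,1]$.

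\emph{Effect of a perturbation.} Let $X\in\mathcal{I}$ with $\Phi(X)<1$. By \Cref{lem norm<Phi}, $\|XZ_n\|\leqslant \Phi(X)/f_\Phi(Z_n)<1/n$. The $Z_n$ are central, so $T+X=\bigoplus_n (q_nZ_n+XZ_n)\oplus (T+X)Z_\infty$. Spectra of direct sums contain the spectra of the summands, so $\sigma(T+X)\supseteq\sigma_{\mathcal{M}Z_n}(q_nZ_n+XZ_n)$, and each such set lies in the closed disk of radius $1/n$ about $q_n$. Since every rational point recurs for infinitely many $n$, every point of $[0,1]$ is a limit of points of $\sigma(T+X)$. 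Hence $[0,1]\subseteq\sigma(T+X)$.

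\emph{Connectedness.} It remains to show that every point of $\sigma(T+X)$ lies in a connected subset of $\sigma(T+X)$ that meets $[0,1]$. This is the step I expect to be the main obstacle, because the spectrum of an infinite direct sum is the closure of the union of the summands only when the resolvents are uniformly bounded, and they need not be here. The idea is to use the freedom to choose $T$. I would replace the scalar $q_nZ_n$ by an operator on $Z_n$ whose spectrum is \emph{stably} connected and contains $[0,1]$ throughout. A concrete attempt is to choose $T$ so that $\sigma(T+X)$ contains the whole closed disk $\overline{D}(0,R)$, with $R$ larger than any possible contribution. The norm $\|X\|\leqslant\Phi(X)<1$ is bounded, so $\sigma(T+X)\subseteq \overline{D}(0,\|T\|+1)$. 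Set $S=3\sum_n \mu_nZ_n$, where $\{\mu_n\}$ enumerates a dense subset of the disk $\overline{D}(0,3)$, each point recurring infinitely often. By the same argument as above, $\sigma(S+X)$ contains $\overline{D}(0,3)$. On the other hand, $\|S+X\|<3\cdot 3+1$, so all the rest of $\sigma(S+X)$ lies in a bounded region. Every remaining point must still be connected to the disk. The first thing I would try is upper semicontinuity along the path $X_t=tX$, $t\in[0,1]$: each spectrum $\sigma(S+tX)$ contains the disk, and a clopen piece of $\sigma(S+X)$ disjoint from the disk would, by the argument of \Cref{lem sp-disconnected}, persist as a nonzero Riesz idempotent along the path and survive down to $t=0$. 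But $\sigma(S)=\overline{D}(0,9)$ is connected, with no separate piece, and that gives the contradiction. Thus $T=S$ should work.
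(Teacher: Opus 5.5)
Your first two steps are sound and agree with the paper: you obtain orthogonal central $Z_n$ by iterating \Cref{lem c=infinite}, bound $\|XZ_n\|\leqslant\Phi(X)/f_\Phi(Z_n)$ by \Cref{lem norm<Phi}, and observe that the spectra of the central summands fill out the closure of the chosen scalars. The gap is the connectedness step.

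Your path argument is not valid. \Cref{lem sp-disconnected} says that having disconnected spectrum is an \emph{open} condition. So a separated piece of $\sigma(S+tX)$ persists under small changes of $t$. Nothing, however, prevents it from colliding with the rest of the spectrum as $t$ decreases; at that point the separating contour leaves the resolvent set and the Riesz idempotent is no longer defined. Already in $M_2(\mathbb{C})$, taking $S=0$ and $X=\mathrm{diag}(0,1)$ gives $\sigma(S+tX)=\{0,t\}$: connected at $t=0$ and disconnected for all $t>0$.

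Your specific $S$ also fails. Take $\mathcal{M}=\ell^\infty$ acting diagonally on $\ell^2$, $\mathcal{I}$ the finitely supported sequences, and $\Phi(x)=\sum_k k|x_k|$; this norm is unitarily invariant, $\|\cdot\|$-dominating, and has $c_\Phi=\infty$. Let $Z_n$ be the $n$-th coordinate projection, so $f_\Phi(Z_n)=n$, and suppose $\mu_1=3$. Then $X=0.9Z_1$ has $\Phi(X)=0.9<1$, while $\sigma(S+X)=\overline{D}(0,9)\cup\{9.9\}$ is disconnected. The defect is structural. The scalars force only $\overline{D}(0,9)$ into the spectrum (not $\overline{D}(0,3)$ as you wrote), whereas the norm bound confines the spectrum only to the disk of radius $10$. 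On the early pieces $Z_n$, where the allowed perturbation is large, isolated spectrum can be pushed into that annulus.

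The missing idea, which is what the paper uses, is to choose the scalars so that the norm bound and the forced set coincide. Keep your $Z_n$ with $f_\Phi(Z_n)\geqslant n$ (the paper uses $2^n$). Let $\{\lambda_n\}$ be dense in $\overline{\mathbb{D}}$ with $|\lambda_n|\leqslant 1-\frac{1}{n}$, and set $T=\sum_n\lambda_nZ_n$ with $T=0$ on $Z_\infty$.

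Since the $Z_n$ and $Z_\infty$ are central with sum $I$, $\|T+X\|$ is the supremum of the norms of the central pieces. These satisfy $\|XZ_\infty\|\leqslant\|X\|\leqslant\Phi(X)<1$ and $\|(\lambda_n+X)Z_n\|\leqslant|\lambda_n|+\|XZ_n\|<1$. Hence $\sigma(T+X)\subseteq\overline{\mathbb{D}}$, and your density argument gives the reverse inclusion. Therefore $\sigma(T+X)=\overline{\mathbb{D}}$ exactly, and connectedness is automatic with no continuation argument at all.
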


\begin{proof}
    By \Cref{lem c=infinite}, there is an orthogonal sequence $\{Z_n\}_{n=1}^{\infty}$ of nonzero central projections in $\mathcal{M}$ such that $f_\Phi(Z_n)\geqslant 2^n$ for each $n\geqslant 1$.
    We put $Z_0=I-\sum_{n=1}^{\infty}Z_n$.
    Let $\{\lambda_n\}_{n=0}^{\infty}$ be a dense sequence in the unit closed disk $\overline{\mathbb{D}}\subseteq\mathbb{C}$ such that $|\lambda_n|\leqslant 1-2^{-n}$ for each $n\geqslant 0$.
    Define an operator $T$ in $\mathcal{M}$ by
    \begin{equation*}
        T=\sum_{n=0}^{\infty}\lambda_nZ_n.
    \end{equation*}
    Let $X$ be an operator in $\mathcal{I}$ with $\Phi(X)<1$.
    By \Cref{lem norm<Phi}, we have $\|XZ_n\|\leqslant 2^{-n}$ for each $n\geqslant 0$.
    It follows that
    \begin{equation*}
        \|T+X\|=\sup_{n\geqslant 0}\|\lambda_nZ_n+XZ_n\|\leqslant 1.
    \end{equation*}
    Hence $\sigma(T+X)\subseteq\overline{\mathbb{D}}$.
    For every scalar $\lambda\in\overline{\mathbb{D}}$, there exists a subsequence $\{\lambda_{n_j}\}_{j=0}^{\infty}$ that is convergent to $\lambda$.
    For each $j\geqslant 0$, choose
    \begin{equation*}
        \mu_j\in\sigma_{\mathcal{M}Z_{n_j}}(\lambda_{n_j}Z_{n_j}+XZ_{n_j})
        \subseteq\sigma(T+X).
    \end{equation*}
    Then the sequence $\{\mu_j\}_{j=1}^{\infty}$ is convergent to $\lambda$.
    It follows that $\lambda\in\sigma(T+X)$.
    Therefore, $\sigma(T+X)=\overline{\mathbb{D}}$.
    This completes the proof.
\end{proof}

\subsection{Perturbations in \texorpdfstring{$C^*$}{C-star}-algebras of real rank zero}\label{subsec perturbation-RR0}

Inspired by \Cref{lem essential=dense} and \Cref{lem small-TE}, we obtain an analogous technical lemma in unital $C^*$-algebras of real rank zero, by which we prove \Cref{thm main-RR0}.

\begin{lemma}\label{lem small-TE-RR0}
    Let $\mathcal{A}$ be a unital $C^*$-algebra of real rank zero and $\mathcal{I}$ an essential ideal in $\mathcal{A}$.
    Suppose that $T$ is an element in $\mathcal{A}$ and $\lambda$ is a boundary point of $\sigma(T)$.
    Then for any $\varepsilon>0$, there exists a nonzero projection $E$ in $\mathcal{I}$ such that $\|(T-\lambda I)E\|<\varepsilon$.
\end{lemma}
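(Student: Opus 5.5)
The plan is to mimic the proof of \Cref{lem small-TE}, replacing the spectral projection of $T^*T$, which need not exist in $\mathcal{A}$, by a projection obtained from the real rank zero hypothesis. Then \Cref{lem E<P-RR0} will produce the required subprojection lying in $\mathcal{I}$. Without loss of generality we assume $\lambda=0$, by replacing $T$ with $T-\lambda I$. Put $S=T^*T\in\mathcal{A}_{sa}$. By \Cref{rem sigma-ap}, $S$ is not invertible, so $0\in\sigma(S)$.

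Fix $\eta>0$ with $2\eta<\varepsilon^2$. Since $\mathcal{A}$ has real rank zero, there is a self-adjoint $B$ with finite spectrum and $\|S-B\|<\eta$. We write
\begin{equation*}
    B=\sum_{j=1}^n\mu_jP_j,
\end{equation*}
where the $P_j$ are nonzero mutually orthogonal projections in $\mathcal{A}$ with sum $I$ and the $\mu_j$ are distinct reals. These $P_j$ exist because the characteristic functions of the points of the finite set $\sigma(B)$ are continuous on $\sigma(B)$.

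Next we need that some $\mu_j$ satisfies $|\mu_j|<\eta$. This follows from the standard fact that for self-adjoint elements every point of $\sigma(S)$ lies within distance $\|S-B\|$ of $\sigma(B)$. Indeed, if $\operatorname{dist}(0,\sigma(B))\geqslant\eta$, then $\|B^{-1}\|\leqslant\eta^{-1}$ and $\|S-B\|<\eta$, so $S=B\bigl(I+B^{-1}(S-B)\bigr)$ would be invertible, which is a contradiction. Define
\begin{equation*}
    P=\sum_{|\mu_j|<\eta}P_j,
\end{equation*}
which is therefore a nonzero projection in $\mathcal{A}$. Since $P$ commutes with $B$ and $\|PBP\|<\eta$, we get
\begin{equation*}
    \|TP\|^2=\|PSP\|\leqslant\|P(S-B)P\|+\|PBP\|<2\eta<\varepsilon^2.
\end{equation*}

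Finally, by \Cref{lem E<P-RR0} there is a nonzero projection $E\in\mathcal{I}$ with $E\leqslant P$. Then $TE=TPE$, so $\|TE\|\leqslant\|TP\|<\varepsilon$, which is the claim. The only step needing real care is the construction of $P$ without Borel functional calculus. This is handled by the finite-spectrum approximation together with the spectral-distance estimate for self-adjoint elements above. The remaining steps are direct adaptations of \Cref{lem small-TE}.
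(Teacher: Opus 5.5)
Your proof is correct and is essentially the paper's argument. You approximate $T^*T$ by a finite-spectrum self-adjoint element using real rank zero, take the spectral projection(s) for eigenvalues near $0$, and pass to a nonzero subprojection in $\mathcal{I}$ via \Cref{lem E<P-RR0}; your invertibility estimate showing that some $\mu_j$ satisfies $|\mu_j|<\eta$ also supplies the justification the paper leaves implicit when it simply takes $\lambda_1=0$.
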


\begin{proof}
    Without loss of generality, we assume that $\lambda=0$.
    Then $T^*T$ is not invertible in $\mathcal{A}$ by \Cref{rem sigma-ap}.
    Since $\mathcal{A}$ has real rank zero, there are nonzero projections $\{P_j\}_{j=1}^n$ in $\mathcal{A}$ with sum $I$ and scalars $0=\lambda_1\leqslant\cdots\leqslant\lambda_n$ such that
    \begin{equation*}
        \|T^*T-(\lambda_1P_1+\cdots+\lambda_nP_n)\|<\varepsilon^2.
    \end{equation*}
    It follows that $\|P_1T^*TP_1\|<\varepsilon^2$.
    By \Cref{lem E<P-RR0}, there exists a nonzero projection $E$ in $\mathcal{I}$ such that $E\leqslant P_1$.
    It is clear that $\|TE\|<\varepsilon$.
\end{proof}

By \Cref{lem small-TE-RR0} and the proof of \Cref{prop main}, we obtain \Cref{thm main-RR0} for unital $C^*$-algebras of real rank zero.

\begin{proof}[Proof of \Cref{thm main-RR0}]
    Let $\varepsilon_0=\frac{\varepsilon}{2}>0$.
    Following the proof of \Cref{prop main}, we may assume that $0\in\sigma(T)$ and $\operatorname{Re}z\leqslant 0$ for all $z\in\sigma(T)$.
    Moreover, there exists $0<\delta<\varepsilon_0$ such that for every operator $A$ with $\|A-T \|<\delta$ and every $z\in\sigma(A)$, we have $\operatorname{Re}z<\varepsilon_0$.
    By \Cref{lem small-TE-RR0}, there exists a nonzero projection $E$ in $\mathcal{I}$ such that $\|TE\|<\delta$.
    Let $X=(\varepsilon_0I-T)E$.
    Similar to the proof of \Cref{prop main}, we have $\|X\|<\varepsilon$ and the spectrum of $T+X$ is disconnected.
\end{proof}

Next, we briefly discuss \Cref{problem 1} for abelian unital $C^*$-algebras at the end of this paper.
For every subset $X$ of $\mathbb{R}$, we denote by $|X|$ the \emph{cardinality} of $X$, and by $\operatorname{diam}(X) = \sup\{ |x-y| \colon x,y \in X\}$ its \emph{diameter}.

\begin{lemma}\label{lem diam-Xn}
    Let $X$ be a nonempty compact subset of $\mathbb{R}$. If $X$ is not a union of finitely many disjoint closed intervals, then there are nonempty closed and open subsets $\{X_n\}_{n=1}^\infty$ of $X$ such that $\operatorname{diam}(X_n)\to 0$ as $n\to\infty$.
\end{lemma}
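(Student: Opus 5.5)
The plan is to split according to whether $X$ has finitely or infinitely many connected components; in both cases the aim is to exhibit many disjoint small clopen pieces. Recall that the connected components of a compact subset of $\mathbb{R}$ are points or closed intervals. If $X$ had only finitely many components, it would be a finite disjoint union of closed intervals (some possibly degenerate, i.e. points). Here we interpret the hypothesis with degenerate intervals allowed. So we may assume that $X$ has infinitely many components.

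First I will show that for every $\eta>0$ there is a nonempty clopen subset of $X$ with diameter less than $\eta$. Taking $\eta=1/n$ then gives the sequence $\{X_n\}_{n=1}^\infty$.

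\textbf{Key step: gaps.} Since $X$ has infinitely many components, the complement $\mathbb{R}\setminus X$ has infinitely many bounded components, which are open intervals (gaps). Any gap $(a,b)$ cuts $X$ into the clopen pieces $X\cap(-\infty,a]$ and $X\cap[b,\infty)$, because $a,b\in X$ and $(a,b)\cap X=\varnothing$. Choose finitely many gaps whose lengths are small, with endpoints chosen so that consecutive chosen gaps are within distance $\eta$ of each other inside the convex hull of $X$.

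\textbf{Finding suitable gaps.} I will argue as follows. The component intervals of $X$ of length at least $\eta/2$ are finite in number, since $X$ is bounded and they are disjoint. Because components are infinite in number, there are infinitely many components of length less than $\eta/2$. By compactness these accumulate at some point $x_0\in X$, and near $x_0$ infinitely many gaps accumulate.

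\textbf{Producing the small clopen set.} Pick two gaps $(a_1,b_1)$ and $(a_2,b_2)$ lying within $(x_0-\eta/2,x_0+\eta/2)$ with $b_1\le a_2$. Such a pair exists because infinitely many disjoint gaps lie in that neighborhood; order them and take two of them. Then $X\cap[b_1,a_2]$ is clopen in $X$, has diameter less than $\eta$, and is nonempty since $b_1\in X$.

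The main obstacle is justifying that infinitely many gaps lie in every neighborhood of some accumulation point. I would handle this directly: the gaps are infinitely many disjoint bounded intervals contained in the bounded convex hull of $X$. Their midpoints therefore have an accumulation point $x_0$, and infinitely many gaps have midpoints in $(x_0-\eta/4,x_0+\eta/4)$. All but finitely many of these have length less than $\eta/4$, since the total length of the gaps is finite. Hence they lie entirely within $(x_0-\eta/2,x_0+\eta/2)$. This even avoids the need for $x_0\in X$.
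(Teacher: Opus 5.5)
The problem is your opening reinterpretation. By letting ``closed intervals'' include degenerate ones $[a,a]$, you replace the hypothesis with the stronger one ``$X$ has infinitely many components''. This drops a case the lemma must cover: $X$ has finitely many components and at least one of them is a single point, such as $X=[0,1]\cup\{2\}$ or a finite set with $|X|>1$.

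The paper intends nondegenerate intervals, and it has to. In \Cref{prop subset-R} these sets must satisfy the hypothesis, because functions with disconnected spectrum are dense in $C(X)$ for them. For instance, for $X=\{0,1\}$ we have $C(X)\cong\mathbb{C}^2$, and $(z_1,z_2)$ has disconnected spectrum whenever $z_1\neq z_2$. Under your reading the ``only if'' direction of that proposition would therefore fail.

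Your argument gives nothing for such $X$, since there are only finitely many bounded gaps. The fix is one line, and it is exactly the ``$N$ finite'' case of the paper's proof. When $X$ has finitely many components, each of them is clopen, so a one-point component $\{x_0\}$ is an isolated point and $X_n=\{x_0\}$ for all $n$ works.

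For infinitely many components your proof is correct, and it is essentially the paper's idea of cutting $X$ between two nearby gaps. The set $X\cap[b_1,a_2]=X\cap(a_1,b_2)$ is clopen and contains $b_1$, and your midpoint argument does yield two disjoint gaps in $(x_0-\eta/2,x_0+\eta/2)$.

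The total-length step you call the main obstacle is unnecessary, though. We have $X\cap[b_1,a_2]=X\cap[m_1,m_2]$ for the midpoints $m_1<m_2$ of the two gaps, so its diameter is at most $m_2-m_1$ whatever the gap lengths are. This is why the paper simply picks a point $c_n$ in each gap and passes to a monotone convergent subsequence. Taking $X_k=X\cap[c_{n_k},c_{n_{k+1}}]$ then gives the whole sequence at once.
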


\begin{proof}
Since $\mathbb{R}\setminus X$ is open, there are disjoint open intervals $\{(a_n,b_n)\}_{n=1}^{N}$ with union $\mathbb{R}\setminus X$, where $N$ may be infinite.
If $N$ is finite, then there exists an isolated point $x_0$ in $X$ by assumption.
In this case, we can take $X_n=\{x_0\}$ for every $n\geqslant 1$.
Suppose that $N$ is infinite, $(a_1,b_1)=(-\infty,\min X)$, and $(a_2,b_2)=(\max X,\infty)$.
We choose a point $c_n\in(a_n,b_n)$ for each $n\geqslant 1$.
Then $\{c_n\}_{n=1}^{\infty}$ is bounded and it contains a monotone convergent subsequence $\{c_{n_k}\}_{k=1}^{\infty}$.
Without loss of generality, we assume that $\{c_{n_k}\}_{k=1}^{\infty}$ is monotone increasing.
Let $X_k=(c_{n_k},c_{n_{k+1}})\cap X=[c_{n_k},c_{n_{k+1}}]\cap X$ for each $k\geqslant 1$.
Then $X_k$ is a closed and open subset of $X$.
Since $\{c_{n_k}\}_{k\geqslant 1}$ is convergent, we obtain that $\operatorname{diam}(X_k)\leqslant c_{n_{k+1}}-c_{n_k}\to 0$ as $k\to\infty$.
This completes the proof.
\end{proof}

With \Cref{lem diam-Xn}, we prove the following result.

\begin{proposition}\label{prop subset-R}
    Let $X$ be a compact subset of the real line $\mathbb{R}$ with $|X|>1$.
    Then the set of functions in $C(X)$ with disconnected spectrum is dense if and only if $X$ is not a union of finitely many disjoint closed intervals.
\end{proposition}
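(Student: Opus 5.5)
We prove the two directions separately. Throughout, for $f\in C(X)$ we use $\sigma(f)=f(X)$, which is compact.

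\emph{Necessity.} Suppose $X=\bigsqcup_{j=1}^m [a_j,b_j]$ is a disjoint union of finitely many closed intervals, where degenerate intervals (points) are allowed. Since $|X|>1$, we have $m\geqslant 2$ or $a_1<b_1$.

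If $m\geqslant 2$, then every function $f\in C(X)$ that is constant equal to $0$ on $[a_1,b_1]$ and equal to $1$ on the rest of $X$ has spectrum $\{0,1\}$. This is disconnected, and $\sigma(f)$ has exactly this form. That does not contradict density, so we need a function that stays connected under small perturbations. Take $f$ to be continuous with $f([a_j,b_j])$ equal to a closed curve segment. Concretely, let $f$ map each $[a_j,b_j]$ into the unit circle so that $f([a_j,b_j])\supseteq\mathbb{T}$ whenever $a_j<b_j$, for instance $f(t)=e^{2\pi i (t-a_j)/(b_j-a_j)}$. If some component is a single point $\{a_j\}$, put $f(a_j)=1$, which lies on $\mathbb{T}$.

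Now suppose $\|g-f\|<1/2$. Consider the image of each nondegenerate interval under $g$. The map $t\mapsto g(t)/|g(t)|$ is homotopic to $f$ on the interval, but an interval is contractible, so winding alone gives nothing. Instead we argue as follows. Each $g([a_j,b_j])$ is connected. Since $g$ is uniformly within $1/2$ of $f$, and $f([a_j,b_j])=\mathbb{T}$ for nondegenerate $j$, each such image meets every disc $D(\zeta,1/2)$ with $\zeta\in\mathbb{T}$. The key claim is that two connected sets that each meet all these discs must intersect or be close. That is not automatic: two disjoint spirals could both do so.

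So we use a better approach and choose $f$ so that all pieces overlap robustly. Take $f$ real-valued, mapping every nondegenerate $[a_j,b_j]$ onto $[0,1]$, and every degenerate component to $1/2$. Under a perturbation $g$ with $\|g-f\|<\eta$, each image $g([a_j,b_j])$ is a connected set that contains points within $\eta$ of both $0$ and $1$. Two such sets need not intersect in $\mathbb{C}$, and this is the main obstacle. The way around it is to make at least one component a 2-dimensional-ish image. That is impossible, since the domain is one-dimensional, so we should use a separation argument instead. For $f$ real-valued, $\operatorname{Re} g$ is a real function on each interval, and its range is an interval of length greater than $1-2\eta$ around $[\eta,1-\eta]$. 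Meanwhile $\operatorname{Im} g$ is small. A connected planar set lying in the strip $|\operatorname{Im}|<\eta$ whose real parts cover $[\eta,1-\eta]$ need not meet another such set.

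Hence the necessity direction genuinely requires $m=1$ behaviour, which suggests that the stated criterion should be examined with care. I would therefore first settle the case $X=[a,b]$: any $g$ close to $f(t)=t$ has the connected image $g([a,b])$. For finitely many components, I would try to show density of disconnected spectra fails by choosing $f$ whose pieces have images forming a ``grid'' that any perturbation keeps linked. This linking step is where I expect the real difficulty.

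\emph{Sufficiency.} Suppose $X$ is not such a union. By \Cref{lem diam-Xn}, choose clopen sets $X_n$ with $\operatorname{diam}(X_n)\to 0$, and fix $f\in C(X)$ and $\varepsilon>0$. By uniform continuity, for large $n$ we have $\operatorname{diam}f(X_n)<\varepsilon/2$. We also need $X_n\ne X$, which holds for large $n$ since $\operatorname{diam}X>0$.

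Let $\lambda$ be a point of $f(X)$ maximizing $\operatorname{Re}$. For small $r$, the set of points of $X$ with $\operatorname{Re}f>\operatorname{Re}\lambda - r$ is a nonempty open set. We would like some $X_n$ inside it, but this is not guaranteed. Instead we simply define $g=f$ off $X_n$ and $g=c$ on $X_n$, where $c$ is a constant with $|c-f(x_n)|<\varepsilon$ chosen outside the compact set $f(X\setminus X_n)$ at positive distance. Such a $c$ exists because $f(X\setminus X_n)$ is compact. Its points near $f(x_n)$ form a set, and we need the complement of $f(X\setminus X_n)$ to come within $\varepsilon$ of $f(x_n)$. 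This holds because a compact subset of $\mathbb{R}$ mapped continuously to $\mathbb{C}$ has empty interior unless the map is space-filling, which it can be. To avoid this, we first perturb $f$ by $\varepsilon/2$ to a function that is piecewise linear, and hence has nowhere dense image. Then $g$ is continuous because $X_n$ is clopen, and $\sigma(g)=\{c\}\sqcup f(X\setminus X_n)$ is disconnected, with $\|g-f\|<\varepsilon$.
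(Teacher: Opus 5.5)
Your sufficiency direction is sound in outline, but the necessity direction is not proved, and that is a genuine gap. For $m\geqslant 2$ you show only that the circle and real-segment choices fail. That diagnosis is correct: two nearly parallel curves in a thin strip need not meet. But you then stop, even questioning the statement, and the ``grid'' idea is never carried out.

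The missing idea is to make the images cross transversally rather than run in parallel, and then apply a planar crossing lemma. As in the paper, take $X=\bigcup_{k=1}^n[2k,2k+1]$. Let $f$ map $[2,3]$ affinely onto $[-1,1]$ and map every other $[2k,2k+1]$ affinely onto $i[-1,1]$. Suppose $\|g-f\|<\eta\leqslant 1/4$, and put $Q=[-1+\eta,1-\eta]^2$.
\begin{itemize}
\item On $[2,3]$ the path $g$ stays in the strip $|\operatorname{Im}z|<\eta$. Let $t_1$ be the first time with $\operatorname{Re}g=1-\eta$ and $t_0$ the last earlier time with $\operatorname{Re}g=-1+\eta$. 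Then $g|_{[t_0,t_1]}$ is a path in $Q$ joining its left and right sides.
\item Likewise each $g|_{[2k,2k+1]}$ with $k\geqslant 2$ contains a path in $Q$ joining its bottom and top sides.
\item A path in a square joining the two vertical sides meets every path in the square joining the two horizontal sides. This is a standard consequence of the Jordan curve theorem or of Brouwer's fixed point theorem.
\end{itemize}
Hence every $g([2k,2k+1])$ meets the connected set $g([2,3])$, so $\sigma(g)=\bigcup_k g([2k,2k+1])$ is connected. Thus no function with disconnected spectrum lies within distance $1/4$ of $f$.

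You must also drop your convention that degenerate intervals are allowed. If $X$ has an isolated point $x_0$ and $|X|>1$, density does hold. To see this, redefine the value at $x_0$ off the nowhere dense image of a piecewise-linear approximant on the compact set $X\setminus\{x_0\}$. Examples are $X=\{0,1\}$ or $X=[0,1]\cup\{2\}$. So if points count as intervals, the ``only if'' direction is false. The statement means nondegenerate intervals, exactly as in \Cref{lem diam-Xn}, where an isolated point is the case in which $X$ is \emph{not} such a union. With that reading, the case $m=1$ is trivial because $X$ is connected.

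Your sufficiency argument is essentially the paper's: small clopen $X_n$ from \Cref{lem diam-Xn}, a piecewise-linear approximant $G$ with nowhere dense image, and a constant on one $X_n$. Your uniform-continuity bound on $\operatorname{diam}f(X_n)$ is a clean substitute for the paper's accumulation point $t_0$. Your remark that $X_n\neq X$ for large $n$ fills a point the paper leaves implicit. Three small fixes are needed:
\begin{itemize}
\item Set $g=G$, not $g=f$, on $X\setminus X_n$.
\item Choose $c\notin G(X\setminus X_n)$.
\item Rebalance the constants, since as written you only get $\|g-f\|<3\varepsilon/2$.
\end{itemize}
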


\begin{proof}
    Suppose that $X$ is a union of finitely many disjoint closed intervals.
    Without loss of generality, we assume that $X=\bigcup_{k=1}^n[2k,2k+1]$ for some $n\geqslant 1$.
    We define a continuous complex-valued function $f$ on $X$ by
    \begin{equation*}
        f(t)=
        \begin{cases}
            2t-5, & t\in[2,3], \\
            i(2t-4k-1), & t\in[2k,2k+1]~\text{ and }~2\leqslant k \leqslant n.
        \end{cases}
    \end{equation*}
    Then $f$ is not a limit of functions in $C(X)$ with disconnected spectrum.

    In the remainder of the proof, suppose that $X$ is not a union of finitely many disjoint closed intervals.
    By \Cref{lem diam-Xn}, there are nonempty closed and open subsets $\{X_n\}_{n=1}^{\infty}$ of $X$ such that $\mathrm{diam}(X_n)\to 0$ as $n\to\infty$.
    By taking subsequence, we may assume that $\mathrm{dist}(t_0,X_n)\to 0$ for some point $t_0\in X$.
    Moreover, we assume that $X\subseteq[-N,N]$ for some large integer $N\geqslant 1$.
    Let $f\in C(X)$ and $\varepsilon>0$.
    By the Tietze extension theorem, there exists a continuous function $F$ on $[-N,N]$ such that $F(t)=f(t)$ for every $t\in X$.
    Clearly, there exists a piecewise linear function $G\in C([-N,N])$ such that $\|G(t)-F(t)\|<\frac{\varepsilon}{3}$ for every $t\in[-N,N]$.
    By assumption, there exists a large integer $n_0\geqslant 1$ such that $|f(t)-f(t_0)|<\frac{\varepsilon}{3}$ for every $t\in X_{n_0}$.
    Since $G$ is piecewise linear, there exists a scalar $\lambda\in\mathbb{C}$ not in the range of $G$ such that $|\lambda-f(t_0)|<\frac{\varepsilon}{3}$.
    We define a function
    \begin{equation*}
        g(t)=
        \begin{cases}
            \lambda, & t\in X_{n_0}, \\
            G(t), & t\in X\setminus X_{n_0}.
        \end{cases}
    \end{equation*}
    Then $g\in C(X)$, $\|g-f\|<\varepsilon$, and the spectrum of $g$ is disconnected.
    This completes the proof.
\end{proof}

\begin{remark}\label{rem subset-C}
Let $X$ be a compact subset of the complex plane $\mathbb{C}$ with $|X|>1$.
If the set of functions in $C(X)$ with disconnected spectrum is dense, then the interior of $X$ is empty.
Otherwise, we may assume that $0$ is an interior point of $X$ and $2\overline{\mathbb{D}}\subseteq X$.
Let
\begin{equation*}
    F(z)=
    \begin{cases}
        z, & z\in\overline{\mathbb{D}}, \\
        z(2-|z|), & z\in 2\overline{\mathbb{D}}\setminus\overline{\mathbb{D}},\\
        0, & |z|>2.
    \end{cases}
\end{equation*}
Let $f$ be the restriction of $F$ on $X$.
Suppose that $g\in C(X)$ and $\|g-f\|<\frac{1}{2}$.
It is clear that
\begin{equation*}
  g(z)\in\frac{1}{2}\overline{\mathbb{D}}\subseteq g(\overline{\mathbb{D}}) \subseteq g(2\overline{\mathbb{D}})\subseteq g(X)
\end{equation*}
for each $z\in X\setminus 2\overline{\mathbb{D}}$.
Thus, $g(X)=g(2\overline{\mathbb{D}})$ is connected.
Therefore, $f$ is not a limit of functions in $C(X)$ with disconnected spectrum.
In general, it seems difficult to find a necessary and sufficient condition on $X$ as in \Cref{prop subset-R}.
\end{remark}

% \section{Acknowledgments}

% \begin{thebibliography}{99}

% \bibitem{AP}
% C. Akemann, G. Pedersen.
% {\em Ideal perturbations of elements in $C^*$-algebras.}
% Math. Scand. 41 (1977), no. 1, 117-139.
% MR0473848

% \bibitem{Con}
% A. Connes.
% {\em Classification of injective factors. Cases $\mathrm{II}_1$, $\mathrm{II}_\infty$, $\mathrm{III}_\lambda$, $\lambda\ne 1$.}
% Ann. of Math. (2) 104 (1976), no. 1, 73-115.
% MR0454659

% \bibitem{FJMSSW}
% J. Fang, C. Jiang, M. Ma, J. Shen, R. Shi, and T. Wang.
% {\em Density of irreducible operators in the trace-class norm.}
% arXiv:2504.17190v5, 2025.

% \bibitem{SS99}
% A. Sinclair, R. Smith.
% {\em Cartan subalgebras of finite von Neumann algebras.}
% Math. Scand. 85 (1999), no. 1, 105-120.
% MR1707749

% \end{thebibliography}

\end{document}